\documentclass[12pt]{amsart}
\usepackage{amssymb}
\usepackage{amsthm}
\usepackage{bm}
\usepackage{latexsym}
\usepackage{amsmath}
\usepackage{eufrak}
\usepackage{mathrsfs}
\usepackage{amscd}
\usepackage[all]{xy}
\usepackage[usenames]{color}
\usepackage[dvips]{graphicx}

\theoremstyle{plain}
\newtheorem{teo}{Theorem}[section]

\newtheorem{question}[teo]{Question}
\newtheorem{prop}[teo]{Proposition}
\theoremstyle{definition}
\newtheorem{defi}{Definition}[section]
\newtheorem{obs}{Remark}[section]
\newtheorem{example}{Example}[section]

   \DeclareMathOperator{\sing}{sing}

\def\K{{\mathbb{K}}}
\def\R{{\mathbb{R}}}

\def\A{{\mathcal{A}}}
\def\X{{\mathcal{X}}}

\def\B{{\mathscr{B}}}

\def\LL{{\mathscr{L}}}

\def\Z{{\mathbb Z}}
\def\Q{{\mathbb Q}}
\def\H{{\mathscr H}}
\def\PP{{\mathscr P}}
\def\P{{\mathbb P}}
\def\C{{\mathbb C}}
\def\Af{{\mathbb A}}

\pagestyle{plain}

\setlength{\textwidth}{16.4cm} \setlength{\textheight}{23.4cm} \topmargin -0.2cm \oddsidemargin -0.2cm \evensidemargin -0.2cm

\begin{document}
\bibliographystyle{amsplain}

\title[Construcci\'on]{On line arrangements with applications to $3$-nets}
\author{Giancarlo Urz\'ua}

\email{urzua@math.umass.edu} \maketitle

\begin{abstract}
We show a one-to-one correspondence between arrangements of $d$ lines in $\P^2$, and lines in $\P^{d-2}$. We apply this correspondence to
classify $(3,q)$-nets over $\C$ for all $q\leq 6$. When $q=6$, we have twelve possible combinatorial cases, but we prove that only nine of them
are realizable over $\C$. This new case shows several new properties for $3$-nets: different dimensions for moduli, strict realization over
certain fields, etc. We also construct a three dimensional family of $(3,8)$-nets corresponding to the Quaternion group.
\end{abstract}

%---------------------------------------------------------------------------------------------------------------------------------------------
\section{Introduction.} \label{s1}

We start by showing a one-to-one correspondence between arrangements of $d$ lines in $\P^2$, and lines in $\P^{d-2}$. This correspondence is a
particular case of a more general one between arrangements of $d$ sections on ruled surfaces, which generalize line arrangements (see Remark
\ref{r0}), and certain curves in $\P^{d-2}$. This was developed by the author in \cite{Urzua4}. In there, we consider arrangements as single
curves via moduli spaces of pointed stable rational curves. One of the main ingredients is the description of these moduli spaces given by
Kapranov in \cite{KaVeronese93} and \cite{KaChow93}. In the present paper, we only treat the case of line arrangements, giving a proof of this
correspondence by means of quite elementary geometry. An important consequence of seeing an arrangement as a single curve is, as it turns out,
to answer questions about realization of certain line arrangements. That is the second part of this paper. Through this correspondence, we are
able to classify the so-called $(3,q)$-nets over $\C$ for all $q \leq 6$, and the Quaternion nets. This classification shows various new
properties for $3$-nets, and opens the question about realization of Latin squares in $\P_{\C}^2$ (they give the combinatorial data which
defines a $3$-net). The following is an outline of the paper.

In Section \ref{s2}, we prove a one-to-one correspondence between arrangements of $d$ lines, and lines in $\P^{d-2}$. An arrangement of $d$
lines is a set $$\A=\{L_1,L_2,\ldots,L_d \}$$ of $d$ labeled lines in $\P^2$ such that $\bigcap_{i=1}^d L_i=\emptyset$. As it was pointed out
above, our general correspondence is between arrangements of sections and curves. For this reason, instead of considering line arrangements, we
consider pairs $(\A,P)$ where $\A$ is an arrangement of $d$ lines, and $P \in \P^2$ is a point outside of $\bigcup_{i=1}^d L_i$. In Proposition
\ref{p1}, we prove a one-to-one correspondence between these pairs, up to projective equivalence, and lines in $\P^{d-2}$ outside of a fixed
hyperplane arrangement $\H_d$. We also sketch a second proof of it (see Remark \ref{r0}) to hint the more general correspondence mentioned
before (see \cite{Urzua4} for details).

Under this correspondence, for a fixed arrangement $\A$, different choices of $P$ may produce different lines in $\P^{d-2}$. Thus, if one fixes
the combinatorial type of $\A$ and wants to study its moduli space, the presence of this artificial point $P$ introduces more parameters than
needed. In practice, even the question of realization of $\A$ becomes hard with this extra point $P$. To eliminate this difficulty, we take $P$
in $\A$ and consider the new pair $(\A',P)$, where the lines in $\A'$ are the lines in $\A$ not containing $P$ (with a certain labelling). By
taking $P$ as a point lying on several lines of $\A$, one greatly simplifies computations to prove or disprove its realization over some field,
and to find a moduli space for its combinatorial type.

In Sections \ref{s3} and \ref{s4}, we use our method to study a particular type of line arrangements which are called nets. There is a large
body of literature about them (cf. \cite{Aczel}, \cite{BarStram}, \cite{Chein}, \cite{DeKe74}, \cite{LibYuz00}, \cite{YuzNets04},
\cite{Stipins1}, \cite{Stipins2}, \cite{FalkYuz07}). Nowadays, they are of interest to topologists who study resonance varieties of complex line
arrangements (see \cite{LibYuz00}, \cite{YuzNets04}, \cite{Buzu05}, \cite{FalkYuz07}). In general, they can be thought as the geometric
structures of finite quasigroups, which in turn are intimately related with Latin squares \cite{DeKe74}. We define $(p,q)$-nets in Section
\ref{s3}. We exemplify our correspondence by computing the Hesse arrangement, which is the only $(4,3)$-net over $\C$, and by showing that
$(4,4)$-nets do not exist in characteristic different than $2$ (they do in characteristic $2$, see Example \ref{e3}). In \cite{YuzNets04},
Yuzvinsky proved that $(p,q)$-nets over $\C$ are only possible for $p=3,4,5$ (not true in positive characteristic, where any $p$ is possible,
see Example \ref{e3}). Examples of $(5,q)$-nets were unknown, and for $(4,q)$-nets the only example was the Hesse arrangement. In
\cite{Stipins1}, Stipins proved that $(5,q)$-nets do not exist over $\C$, leaving open the case $p=4$. It is believed that the only $(4,q)$-net
is the Hesse arrangement. In Section \ref{s4}, we present a classification for $(3,q)$-nets over $\C$ with $q \leq 6$.

It is known that a $q \times q$ Latin square provides the combinatorial data which defines a $(3,q)$-net (see \cite{DeKe74}, \cite{LibYuz00},
\cite{kawa07}, \cite{Stipins1}). Until very recently, the only known $(3,q)$-nets corresponded to Latin squares coming from multiplication
tables of certain abelian groups. Yuzvinsky conjectured in \cite{YuzNets04} that this should always be the case. In \cite{Stipins1}, there was
given a three dimensional family of $(3,5)$-nets not coming from a group. For the case $q=6$, we have twelve possible cases associated to the
twelve main classes of $6 \times 6$ Latin squares. In Section \ref{s4}, we show that only nine of them are realizable in $\P^2$ over $\C$. These
nine cases present new properties for $3$-nets: we have four three dimensional and five two dimensional families, some of them define nets
strictly over $\C$, for others we have nets over $\R$ or even over $\Q$, etc. After that, we construct a three dimensional family of
$(3,8)$-nets associated to the Quaternion group, which has members defined over $\Q$. The new cases corresponding to the symmetric and
Quaternion groups show that there are $(3,q)$-nets associated to non-abelian groups (see \cite[Conj. 6.1]{YuzNets04}). Out of this, a natural
question is: \textit{find a combinatorial characterization of the main classes of Latin squares (see Remark \ref{r2}) which realize $(3,q)$-nets
in $\P_{\C}^2$}.

\vspace{0.1 cm} We denote the projective space of dimension $n$ by $\P^n$, and a point in it by $[x_1:\ldots:x_{n+1}]=[x_i]_{i=1}^{n+1}$. If
$P_1, \ldots,P_r$ are $r$ distinct points in $\P^n$, then $\langle P_1, \ldots,P_r \rangle$ is the projective linear space spanned by them. The
points $P_1, \ldots,P_{n+2}$ in $\P^{n}$ are said to be in general position if no $n+1$ of them lie in a hyperplane.

\vspace{0.1cm} \textbf{Acknowledgments}: I am grateful to Dave Anderson, my thesis advisor Igor Dolgachev, Sean Keel, Finn Knudsen, Janis
Stipins, and Jenia Tevelev for valuable discussions. I would also like to acknowledge the referee for helping me to improve the exposition of
this paper. \vspace{0.2cm}

%\tableofcontents

%--------------------------------------------------------------------------------------------------------------------------------------------
\section{Arrangements of $d$ lines in $\P^2$, and lines in $\P^{d-2}$.} \label{s2}

\begin{defi}
Let $d\geq 3$ be an integer. An \underline{arrangement of $d$ lines} $\A$ is a set of $d$ labeled lines $\{L_1, \ldots, L_d \}$ in $\P^2$ such
that $\bigcap_{i=1}^d L_i = \emptyset$. \label{d1}
\end{defi}

When the labelling is not relevant, we will consider $\A$ as the plane curve $\bigcup_{i=1}^d L_i$. We introduce ordered pairs $(\A,P)$, where
$\A$ is an arrangement of $d$ lines in $\P^2$, and $P$ is a point in $\P^2 \setminus \A$. If $(\A,P)$ and $(\A',P')$ are two such pairs, we say
that they are isomorphic if there exists an automorphism $T$ of $\P^2$ such that $T(L_i)=L'_i$ for every $i$, and $T(P)=P'$. Let $\LL_d$ be the
\underline{set of isomorphism classes of pairs} $(\A,P)$. For example, clearly $\LL_3$ is a set with only one element, represented by the class
of the pair $$\big( \{\{x=0\}, \{y=0 \}, \{z=0\} \}, [1:1:1] \big).$$

On the other hand, let us fix $d$ points in $\P^{d-2}$ in general position. We precisely take $P_1=[1:0:\ldots:0],
P_2=[0:1:0:\ldots:0],\ldots,P_{d-1}=[0:\ldots:0:1], P_d=[1:\ldots:1]$. Consider the projective linear spaces $$\Lambda_{i_1, \ldots,i_r}=
\langle P_j : j\notin \{i_1, \ldots,i_r \} \rangle,$$ where $1\leq r \leq d-1$ and $i_1, \ldots,i_r$ are distinct numbers, and let $\H_{d}$ be
the union of all the hyperplanes $\Lambda_{i,j}$. Hence, $\Lambda_{i,j}=\{ [x_1:\ldots:x_{d-1}] \in \P^{d-2}: x_i=x_j \}$ for $i,j\neq d$,
$\Lambda_{i,d}=\{ [x_1:\ldots:x_{d-1}] \in \P^{d-2}: x_i=0 \}$, and $$\H_d = \{ [x_1:\ldots:x_{d-1}] \in \P^{d-2}: \ x_1 x_2 \cdots x_{d-1}
\prod_{i<j} (x_j-x_i)=0 \}.$$

The proof of the following proposition is inspired by a particular case of the so-called Gelfand-MacPherson correspondence \cite[Chap.
2]{KaChow93}.

\begin{prop}
There is a one-to-one correspondence between $\LL_d$ and the set of lines in $\P^{d-2}$ not contained in $\H_d$. \label{p1}
\end{prop}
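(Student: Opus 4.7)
The plan is to construct mutually inverse maps $\Phi:\LL_d\to\{\text{lines in }\P^{d-2}\setminus \H_d\}$ and $\Psi$ by working with normalized defining equations. For $\Phi$, the key observation is that since $P\notin L_i$, for each $i$ there is a unique linear form $\ell_i$ on $\C^3$ with $L_i=\{\ell_i=0\}$ and $\ell_i(P)=1$. I would define $\Phi(\A,P)$ to be the image of the rational map
$$\varphi_{\A,P}:\P^2\dashrightarrow\P^{d-2}, \quad x\mapsto[\ell_1(x)-\ell_d(x):\ldots:\ell_{d-1}(x)-\ell_d(x)].$$
Fixing coordinates with $P=[0:0:1]$ forces $\ell_i=u_ix+v_iy+z$, so $\varphi_{\A,P}$ factors as projection from $P$ onto $\P^1$ followed by the linear map $[x:y]\mapsto[(u_i-u_d)x+(v_i-v_d)y]_{i=1}^{d-1}$. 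The $(d-1)\times 2$ coefficient matrix has rank $2$: a rank drop would give a linear form $m=ax+by$ with $\ell_i-\ell_d=c_im$ for all $i$, making every $L_i$ pass through the common point $\{\ell_d=m=0\}$, contradicting $\bigcap L_i=\emptyset$. Hence $\Phi(\A,P)$ is a line, and it avoids $\H_d$ because lying in $\Lambda_{i,j}$ would force two of the forms, hence two lines in $\A$, to coincide. Well-definedness on $\LL_d$ follows from the uniqueness of the normalization: if $T\in PGL_3$ realizes $(\A,P)\cong(\A',P')$, then $\ell'_i\circ T=\ell_i$ by evaluating at $P$, and the two parametrizations visibly trace the same line.

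For $\Psi$, given $\ell\not\subset \H_d$, I would choose two distinct spanning points $A=[\alpha_i]$, $B=[\beta_i]$ on $\ell$ and set $\ell_i=\alpha_ix+\beta_iy+z$ for $i<d$, $\ell_d=z$, $P=[0:0:1]$, and $L_i=\{\ell_i=0\}$. Distinctness of the $L_i$ is exactly the hypothesis $\ell\not\subset \H_d$; emptiness of $\bigcap L_i$ follows because a common zero would have $z=0$ and force the matrix with rows $(\alpha_i,\beta_i)$ to have rank $\leq 1$, contradicting $A\neq B$. A different spanning pair $(A',B')=(A,B)\cdot M$ with $M\in GL_2$ is absorbed by the linear change of $(x,y)$ by $M^{-1}$, giving a well-defined $\Psi(\ell)\in\LL_d$.

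The identity $\Phi\circ\Psi=\mathrm{id}$ is immediate from the parametrization; the remaining step, and the one I expect to need the most care, is $\Psi\circ\Phi=\mathrm{id}$. Starting from $(\A,P)$ with $P=[0:0:1]$ and $\ell_i=u_ix+v_iy+z$, the composition returns the arrangement cut out by $\ell'_i=(u_i-u_d)x+(v_i-v_d)y+z$ and $\ell'_d=z$, which do not equal the originals on the nose. The isomorphism in $\LL_d$ will be witnessed by the explicit $T\in PGL_3$ given by $(x,y,z)\mapsto(x,y,z+u_dx+v_dy)$, which fixes $P$ and satisfies $\ell'_i\circ T^{-1}=\ell_i$. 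The principal obstacle is therefore this bookkeeping: the normalization only determines the forms after coordinates on $\P^2$ are fixed, so $\Psi$ recovers the arrangement up to the stabilizer of $P$ in $PGL_3$, and the explicit translation above is what closes the loop.
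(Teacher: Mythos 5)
Your proposal is correct and is essentially the paper's own argument: your $\varphi_{\A,P}$, with the normalization $\ell_i(P)=1$, is exactly the composite $\varrho\circ\iota_{(\A,P)}$ of the paper's embedding into $\P^{d-1}$ followed by projection from $[1:\ldots:1]$, and your well-definedness check and explicit inverse $\Psi$ (choosing a spanning pair on the line) just make concrete the paper's surjectivity/injectivity step via planes through $[1:\ldots:1]$. The only blemish is a harmless direction slip at the end: for $T(x,y,z)=(x,y,z+u_dx+v_dy)$ one has $\ell_i\circ T^{-1}=\ell'_i$ (equivalently $\ell'_i\circ T=\ell_i$), not $\ell'_i\circ T^{-1}=\ell_i$, and this $T$ does send $L_i$ to $L'_i$ while fixing $P$, as needed.
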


\begin{proof}
Let us fix a pair $(\A,P)$, where $\A$ is defined by the linear polynomials $$L_i(x,y,z)= a_{i,1} x+ a_{i,2} y+ a_{i,3} z, \ 1\leq i \leq d.$$

Consider the embedding $\iota_{(\A,P)}: \P^2 \hookrightarrow \P^{d-1}$ given by $$ [x:y:z] \mapsto
\Bigl[\frac{L_1(x,y,z)}{L_1(P)}:\ldots:\frac{L_d(x,y,z)}{L_d(P)}\Bigr].$$ Then, $\iota_{(\A,P)}(\P^2)$ is a projective plane,
$\iota_{(\A,P)}(P)=[1:\ldots:1]$, and $\iota_{(\A,P)}(L_i)=\iota_{(\A,P)}(\P^2) \cap \{y_i=0 \}$ for every $i \in \{1,2, \ldots,d \}$. We now
consider the projection $$\varrho: \P^{d-1} \setminus [1:\ldots:1] \rightarrow \P^{d-2}, \ \ \ [y_1:y_2:\ldots:y_d] \mapsto
[y_1-y_d:y_2-y_d:\ldots:y_{d-1}-y_d].$$ In this way, if $\Sigma_{i,j}=\{ [y_1:y_2:\ldots:y_d] : \ y_i=y_j \}$, we see that
$\varrho(\Sigma_{i,j})=\Lambda_{i,j}$. Therefore, we have that $\varrho \big(\iota_{(\A,P)}(\P^2) \big)$ is a line in $\P^{d-2}$ not contained
in $\H_d$. To show the one-to-one correspondence, we need to prove that $(\A,P) \mapsto \varrho \big(\iota_{(\A,P)}(\P^2) \big)$ gives a
well-defined bijection between $\LL_d$ and the set of lines in $\P^{d-2}$ not contained in $\H_d$. Clearly we have a bijection between
projective planes in $\P^{d-1}$ passing through $[1:\ldots:1]$ and not contained in $\bigcup_{i,j} \Sigma_{i,j}$, and the set of lines in
$\P^{d-2}$ not contained in $\H_d$.

Let $T: \P^2 \rightarrow \P^2$ be an automorphism of $\P^2$. Let $B=\big(b_{i,j} \big)$ be the $3 \times 3$ invertible matrix corresponding to
$T^{-1}$. Consider the pair $(\A',P')$ defined by $\A'=\{ L'_i=T(L_i) \}_{i=1}^d$ and $P'=T(P)$. Then, the equations defining the lines $L'_i$
are \begin{center} $ \big(\sum_{j=1}^3 a_{i,j}b_{j,1} \big)x+ \big(\sum_{j=1}^3 a_{i,j}b_{j,2} \big) y + \big(\sum_{j=1}^3 a_{i,j}b_{j,3} \big)
z=0.$ \end{center} Hence, we obtain that $\iota_{(\A,P)}= \iota_{(\A',P')} \circ T$, and so our map $(\A,P) \mapsto \varrho
\big(\iota_{(\A,P)}(\P^2) \big)$ is well-defined on $\LL_d$.

It is clearly surjective, so we only need injectivity. Let $\iota_{(\A,P)}$ and $\iota_{(\A',P')}$ be the corresponding maps for the pairs
$(\A,P)$ and $(\A',P')$ such that $\iota_{(\A,P)}(\P^2)=\iota_{(\A',P')}(\P^2)$. Let $T= \iota_{(\A',P')}^{-1} \circ \iota_{(\A,P)}: \P^2
\rightarrow \P^2$. Then, $T$ is an automorphism of $\P^2$ such that $T(L_i)=L'_i$ for every $i$ and $T(P)=P'$. Hence they are isomorphic, and so
we have the one-to-one correspondence.
\end{proof}

\begin{obs}
The following is a sketch of how this one-to-one correspondence works for arrangements of sections on geometrically ruled surfaces (cf. \cite[p.
369]{Hartshorne}) via the moduli spaces $\overline{M}_{0,d+1}$ (cf. \cite{KaVeronese93}). We will do it only for line arrangements, and over
$\C$. For the general case see \cite{Urzua4}.

Let us fix a pair $(\A,P)$ as before, and let $\text{Bl}_P(\P^2)$ be the blow-up of $\P^2$ at the point $P$ \cite[p. 386]{Hartshorne}. Then, we
have an induced genus zero fibration $\text{Bl}_P(\P^2) \rightarrow \P^1$. The pull-back of $\A$ in $\text{Bl}_P(\P^2)$ is an arrangement of $d$
labeled sections, each of which belongs to the fix class $E + F$. Here $E$ is the exceptional divisor of the blow-up, and $F$ is any fiber.
Conversely, given an arrangement of $d$ sections $\A$ in $\text{Bl}_P(\P^2)$ with members in the fix class $E+F$, we blow-down the exceptional
divisor $E$ to obtain a pair $(\A,P)$ in $\P^2$. Isomorphic pairs $(\A,P)$ correspond to isomorphic arrangements of sections (via automorphisms
of the fibration $\text{Bl}_P(\P^2) \rightarrow \P^1$).

Now the correspondence. We have fixed the pair $(\A,P)$, and the fibration $\text{Bl}_P(\P^2) \rightarrow \P^1$ as given above. Consider the
genus zero fibration $f: R \rightarrow \P^1$, where $R$ is the blow-up at all the singular points of $\A$ in $\text{Bl}_P(\P^2)$ except nodes.
Then, $f$ is a family of $(d+1)$-marked stable curves of genus zero. The markings are given by the labeled lines of $\A$, which are now $d$
labeled sections of $f$, and the $(-1)$-curve coming from the exceptional divisor $E$ in $\text{Bl}_P(\P^2)$. Therefore, since
$\overline{M}_{0,d+1}$ is a fine moduli space, we have the following commutative diagram coming from its universal family. $$ \xymatrix{ R
\ar[d]_{f} \ar[r] & \overline{M}_{0,d+2} \ar[d]_{\pi_{d+2}} \\ \P^1  \ar[r]^{g} & \overline{M}_{0,d+1}}$$

Let $B'$ be the image of $g$ in $\overline{M}_{0,d+1}$. It is a projective curve, since $f$ has singular and non-singular fibers, and so $f$ is
not isotrivial. Let us now consider the Kapranov map $\psi_{d+1}: \overline{M}_{0,d+1} \rightarrow \P^{d-2}$ \cite[p. 81]{KaChow93}, and let
$B=\psi_{d+1}(B')$. Because of the geometry of the fibers of $f$ and the Kapranov's construction, one can prove (see \cite{Urzua4}) that $B$
intersects all the hyperplanes $\Lambda_{i,j}$ transversally. Say $B$ intersects $\Lambda_{i,j}$. This means that the lines $L_i$ and $L_j$ of
$\A$ intersect in $\P^2$. But since they are lines, they can only intersect at one point. Therefore, we must have $B.\Lambda_{i,j}=1$, and so
$\deg(B)=1$, that is, $B$ is a line in $\P^{d-2}$. Observe that this line is outside of $\H_d$.

In particular, $B'$ is a smooth rational curve. It is not hard to see the converse, this is, how to obtain a pair $(\A,P)$ from a line in
$\P^{d-2}$ outside of $\H_d$ (see \cite{Urzua4}). Moreover, one can check that the pair we obtain is unique up to isomorphism of pairs. In this
way, to prove the one-to-one correspondence, we have to show that the map $g$ is an inclusion.

Assume $\deg(g)>1$. Notice that $g$ is totally ramified at the points corresponding to singular fibers of $f$, since again they come from
intersections of lines in $\P^2$, and so all the singular fibers have distinct points as images in $B'$. Let $\sing(f)$ be the set of points in
$\P^1$ corresponding to singular fibers of $f$. Then, since $\bigcap_{i=1}^d L_i=\emptyset$, we have $|\sing(f)|\geq 3$ (at least we have a
triangle in $\A$). Now, by the Riemann-Hurwitz formula, we have $$ -2=\deg(g) (-2) + (\deg(g)-1)|\sing(f)| + \epsilon $$ where $\epsilon \geq 0$
stands for the contribution from ramification of $f$ not in $\sing(f)$. But we re-write the equation as $0 = (\deg(g)-1)(|\sing(f)|-2) +
\epsilon$, and since $\deg(g)>1$ and $|\sing(f)|\geq 3$, this is a contradiction. Therefore, $\deg(g)=1$ and we have proved the one-to-one
correspondence. Again, we refer to \cite[Ch. 2 and 3]{Urzua4} for the general one-to-one correspondence involving arrangements of sections on
geometrically ruled surfaces. \label{r0}
\end{obs}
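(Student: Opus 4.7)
The plan is to build an explicit bijection by embedding $\P^2$ into $\P^{d-1}$ using the linear forms defining $\A$, then projecting from the image of $P$ down to $\P^{d-2}$; the resulting line will avoid $\H_d$, and the construction will be reversible up to the natural projective ambiguity.

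For the forward map, given a pair $(\A,P)$, fix linear forms $L_i$ cutting out the lines of $\A$ and rescale so that $L_i(P)=1$ (possible since $P\notin L_i$). Set $\iota\colon \P^2 \to \P^{d-1}$, $[x:y:z]\mapsto [L_1:\cdots:L_d]$. Because $\bigcap_i L_i=\emptyset$, three of the $L_i$ must be linearly independent, so $\iota$ is a closed immersion onto a plane $\Pi\subset\P^{d-1}$ with $\iota(P)=[1:\cdots:1]$ and $\iota(L_i)=\Pi\cap\{y_i=0\}$. I would then compose with the projection $\varrho\colon [y_1:\cdots:y_d]\mapsto [y_1-y_d:\cdots:y_{d-1}-y_d]$ from $[1:\cdots:1]$, which produces a line $\ell\subset\P^{d-2}$. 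Since $\varrho$ carries $\{y_i=y_j\}$ to $\Lambda_{i,j}$ (and $\{y_i=0\}$ to $\Lambda_{i,d}$) and $\Pi$ lies in none of these hyperplanes—the $L_i$ being pairwise distinct and $P$ lying on no $L_i$—the line $\ell$ avoids $\H_d$. Well-definedness on isomorphism classes is direct: if $T\in\aut(\P^2)$ and $(\A',P')=T(\A,P)$, choosing $L_i'=L_i\circ T^{-1}$ gives $\iota_{(\A',P')}\circ T=\iota_{(\A,P)}$, so $\Pi$ and hence $\ell$ are unchanged.

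For the inverse, start from $\ell\subset\P^{d-2}\setminus \H_d$, lift to the unique plane $\Pi\subset\P^{d-1}$ spanned by $\ell$ and $[1:\cdots:1]$, and pick any linear isomorphism $\phi\colon \P^2 \to \Pi$. Pull back the hyperplane sections $\Pi\cap\{y_i=0\}$ to obtain $d$ lines $L_i$ in $\P^2$, and pull back $[1:\cdots:1]$ to obtain $P$. The hypothesis $\ell\not\subset\H_d$ translates, via $\varrho(\{y_i=y_j\})=\Lambda_{i,j}$, into the statements that the $L_i$ are pairwise distinct, share no common point, and none passes through $P$; two choices of $\phi$ differ by an element of $\aut(\P^2)$, so $(\A,P)$ is well defined modulo isomorphism. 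That the two constructions are mutually inverse is then a short unwinding---taking $\phi=\iota_{(\A,P)}$ recovers $(\A,P)$ from its $\ell$---and the main subtlety, which I would address carefully, is matching the $\aut(\P^2)$-ambiguity in the choice of $\phi$ exactly with the isomorphism equivalence on pairs $(\A,P)$, so that the correspondence is a genuine bijection rather than one with nontrivial fibers.
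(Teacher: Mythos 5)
Your construction is correct, but it is not the argument of this remark: what you have written is essentially the paper's elementary proof of Proposition \ref{p1} (the Gelfand--MacPherson-style argument), namely embed $\P^2$ into $\P^{d-1}$ by the forms $L_i/L_i(P)$, project from $[1:\cdots:1]$, and recover the pair from a line $\ell\not\subset\H_d$ by taking the cone (plane) through $\ell$ and $[1:\cdots:1]$ and pulling back the coordinate hyperplane sections, with the $\aut(\P^2)$-ambiguity in the parametrization of that plane matching the isomorphism relation on pairs. The remark under review instead sketches a genuinely different, moduli-theoretic proof: blow up $P$ so that $\A$ becomes $d$ sections of the ruling $\text{Bl}_P(\P^2)\to\P^1$, blow up the non-nodal singular points of $\A$ to obtain a family $f\colon R\to\P^1$ of $(d+1)$-pointed stable genus-zero curves, use the fine moduli property of $\overline{M}_{0,d+1}$ to get a classifying map $g\colon\P^1\to\overline{M}_{0,d+1}$, push its image through Kapranov's map $\psi_{d+1}$ to a curve $B\subset\P^{d-2}$ meeting every $\Lambda_{i,j}$ transversally in a single point (hence $\deg B=1$), and finally show $g$ is an embedding by Riemann--Hurwitz, using that $g$ is totally ramified over the points of $\sing(f)$ and that $|\sing(f)|\geq 3$. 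Your route buys explicitness, validity over an arbitrary field, and the coordinate formulas actually used later for nets; the paper's route buys the conceptual picture (the arrangement becomes a single curve) and, crucially, the generalization to arrangements of sections on geometrically ruled surfaces, where the image in $\P^{d-2}$ is no longer a line and the transversality/degree count is the real content. One small correction to your write-up: $\varrho$ does not carry $\{y_i=0\}$ to $\Lambda_{i,d}$ (that hyperplane misses the center and maps onto all of $\P^{d-2}$); rather $\Lambda_{i,d}=\{x_i=0\}$ is the image of $\Sigma_{i,d}=\{y_i=y_d\}$, so avoidance of $\Lambda_{i,d}$ follows from $L_i\neq L_d$, exactly as for the other components, not from $L_i\not\equiv 0$.
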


In this way, for each pair $(\A,P) \in \LL_d$, we denote its corresponding line in $\P^{d-2}$ by $L(\A,P)$. We now want to describe more
precisely how this one-to-one correspondence relates them.

\begin{defi}
Let $\K$ be any field. The pair $(\A,P)$ is said to be defined over $\K$ if the coefficients of the equations defining the lines in $\A$, and
the coordinates of $P$ are in $\K$. \label{d2}
\end{defi}

Hence, for arbitrary fields $\K$, Proposition \ref{p1} gives a one-to-one correspondence between pairs $(\A,P)$ defined over $\K$, and lines
$L(\A,P)$ in $\P^{d-2}$ defined over $\K$.

\begin{defi}
Let $1< k <d$ be an integer. A point in $\P^2$ is said to be a \underline{$k$-point} of $\A$ if it belongs to exactly $k$ lines of $\A$. If
these lines are $\{L_{i_1},L_{i_2},\ldots,L_{i_k} \}$, we denote this point by $[[i_1,i_2,\ldots,i_k]]$. The \underline{number of $k$-points} of
$\A$ is denoted by $t_k$. \label{d3}
\end{defi}

\begin{obs}
The complexity of an arrangement relies on its $k$-points. There are more constraints for the existence of an arrangement, over some field, than
the plane restriction: any two lines intersect at one point. Combinatorially there are possible line arrangements, with assigned $k$-points,
which may not be realizable in $\P^2$ over $\C$ (we will return to this in the next sections, for the particular case of nets). For instance, we
have the Fano arrangement (formed by seven lines with seven $3$-points) which is not realizable in $\P^2$ over fields of characteristic $\neq
2$. A rather trivial restriction, which is purely combinatorial, is that the numbers $t_k$ must satisfy  $ d \choose 2$ $= \sum_{k=2}^d$ $k
\choose 2$ $ t_k$; this is the only linear relation they satisfy for a fix $d$. In \cite{HiLines83}, Hirzebruch proved the following inequality
for an arrangement of $d$ lines in the complex projective plane having $t_d=t_{d-1}=0$, $$ t_2 + \frac{3}{4} t_3 \geq d + \sum_{k\geq 5} (k-4)
t_k.$$ This is a non-trivial relation among the numbers $t_k$, which comes from the Miyaoka-Yau inequality for complex algebraic surfaces (see
\cite{Urzua4} for more about this type of restrictions). This inequality is clearly not true in positive characteristic. \label{r1}
\end{obs}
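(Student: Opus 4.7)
The plan is to handle the four assertions of the remark separately: the combinatorial identity by double counting, the uniqueness of that identity among linear relations by an explicit-realization argument, Hirzebruch's inequality by reducing to the logarithmic Miyaoka--Yau inequality, and the failure in positive characteristic by producing a concrete counterexample.

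First I would prove the identity $\binom{d}{2} = \sum_{k=2}^d \binom{k}{2} t_k$ by counting unordered pairs of distinct lines of $\A$ in two ways: globally there are $\binom{d}{2}$ such pairs, and since any two lines in $\P^2$ meet at exactly one point, every pair contributes to a unique $k$-point of $\A$; at a $k$-point exactly $\binom{k}{2}$ pairs of lines meet, producing the right-hand side. To see this is the only linear relation among $t_2,\ldots,t_d$ for fixed $d$, I would exhibit $d-1$ realizable arrangements whose $(t_2,\ldots,t_d)$-vectors are affinely independent inside the hyperplane cut out by the identity. Candidates include the near-pencil (for which $t_{d-1}=1$ and $t_2=d-1$), a generic arrangement (for which $t_2=\binom{d}{2}$ and all other $t_k=0$), and intermediate types produced by collapsing pairs of nodes into triple points one at a time; the differences of their $t$-vectors visibly span a $(d-2)$-dimensional subspace.

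For Hirzebruch's inequality, the plan is to apply the Bogomolov--Miyaoka--Yau inequality in its logarithmic form over $\C$. Let $X$ be the blow-up of $\P^2$ at every $k$-point of $\A$ with $k\ge 3$, and let $D$ be the union of the strict transforms of the lines of $\A$ together with the exceptional divisors; then $(X,D)$ is a simple normal crossings pair and the log-Chern numbers $\overline{c}_1(X,D)^2$ and $\overline{c}_2(X,D)$ are explicit linear combinations of $d$ and the $t_k$ obtained from standard intersection and Euler-number bookkeeping. The assumption $t_d=t_{d-1}=0$ ensures $K_X+D$ is big and nef, which is precisely the hypothesis under which the log Miyaoka--Yau inequality $\overline{c}_1(X,D)^2\le 3\,\overline{c}_2(X,D)$ applies. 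Substituting the formulas and using the identity from the first part to eliminate one variable reduces this to $t_2+\tfrac{3}{4}t_3\ge d+\sum_{k\ge 5}(k-4)t_k$.

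Finally, to show the inequality fails in positive characteristic, I would exhibit the Fano arrangement over a field of characteristic $2$: the seven lines of $\P^2(\F_2)$, giving $d=7$, $t_3=7$, and $t_k=0$ otherwise. For this arrangement the left-hand side equals $\tfrac{21}{4}$ while the right-hand side equals $7$, a direct violation. The main obstacle is the Hirzebruch step: extracting the exact coefficients $1$, $\tfrac{3}{4}$, $1$, and $k-4$ requires careful computation of self-intersections and topological Euler numbers on $(X,D)$, and verifying the positivity of $K_X+D$ relies crucially on $t_d=t_{d-1}=0$, which rules out the log Fano and log Calabi--Yau cases where Miyaoka--Yau would give no information.
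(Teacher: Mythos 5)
This statement is a remark, and the paper supplies no proof of it: the identity and the Fano example are stated as known facts, and the inequality is simply attributed to \cite{HiLines83}, where Hirzebruch's own argument runs through abelian covers of $\P^2$ branched along the arrangement and the classical Miyaoka--Yau inequality $c_1^2\le 3c_2$ applied to their minimal resolutions. So your proposal is necessarily a different route; for the hard step you use the logarithmic Miyaoka--Yau inequality applied directly to the pair $(X,D)$ obtained by blowing up the points of multiplicity $\ge 3$, which is the standard modern derivation and avoids the covers at the price of invoking log BMY, whose hypotheses are exactly the delicate point. The elementary parts are fine: the double count of pairs of lines gives $\binom{d}{2}=\sum_k\binom{k}{2}t_k$, and the Fano arrangement over $\F_2$ ($d=7$, $t_3=7$, $t_2=0$, $t_6=t_7=0$) gives $t_2+\tfrac{3}{4}t_3=\tfrac{21}{4}<7=d$, a valid characteristic-$2$ counterexample. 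In the spanning argument for uniqueness of the linear relation, note that the paper defines $t_k$ only for $1<k<d$ (and $t_d=0$ automatically since $\bigcap_i L_i=\emptyset$, which would otherwise itself be a second linear relation), so the ambient space has dimension $d-2$ and you need $d-2$ affinely independent realizable vectors, not $d-1$; the generic arrangement together with, for each $3\le k\le d-1$, an arrangement of $k$ concurrent lines plus $d-k$ generic ones, supplies them.

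The one step that is not merely unfinished but misstated is the sentence ``$t_d=t_{d-1}=0$ ensures $K_X+D$ is big and nef.'' Nefness uses only $t_d=0$: one checks $(K_X+D)\cdot E_p=m_p-2\ge 1$ and, for a strict transform $\tilde{L}$, $(K_X+D)\cdot\tilde{L}=s+n_2(L)-2\ge 0$ because every line carries at least two singular points of $\A$ once no point lies on all $d$ lines. In particular $K_X+D$ is already nef for a near-pencil ($t_{d-1}=1$), for which the inequality fails whenever $d\ge 6$; what $t_{d-1}=0$ actually buys is positivity, e.g.\ for the near-pencil $(K_X+D)^2=(d-3)^2-(d-3)^2=0$, so bigness fails there. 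Even granting $t_d=t_{d-1}=0$, the inequality $(d-3)^2>\sum_{m_p\ge 3}(m_p-2)^2$ is not a one-line check, and it is precisely here (or in the general-type condition for Hirzebruch's covers) that the real content of the theorem sits. So your identification of the ``main obstacle'' is accurate, but the sufficient condition you offer for it needs to be replaced by an actual argument.
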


Let us fix a pair $(\A,P)$, and its line $L(\A,P)$ in $\P^{d-2}$. Let $\lambda$ be a line in $\P^2$ passing through $P$. Notice that $\lambda$
corresponds to a point in $L(\A,P)$. Let $K(\lambda)$ be the set of $k$-points of $\A$ in $\lambda$, for all $1<k<d$; it might be empty or
consist of several points. We write $$ K(\lambda)= \{ [[i_1,i_2,\ldots,i_{k_1}]], [[j_1,j_2,\ldots,j_{k_2}]],\ldots \}.$$

\begin{example}
In Figure \ref{f0}, we have the complete quadrilateral $\A$, formed by the set of lines $\{ L_1, \ldots , L_6 \}$, and a point $P$ outside of
$\A$. Through $P$ we have all the $\lambda$ lines. In the figure, we have named two such lines: $\lambda$ and $\lambda '$. Thus, $K(\lambda)=\{
[[3,6]], [[1,4]] \}$ and $K(\lambda ')= \{ [[1,2,3]] \}$.

\begin{figure}[htbp]
\includegraphics[width=12cm]{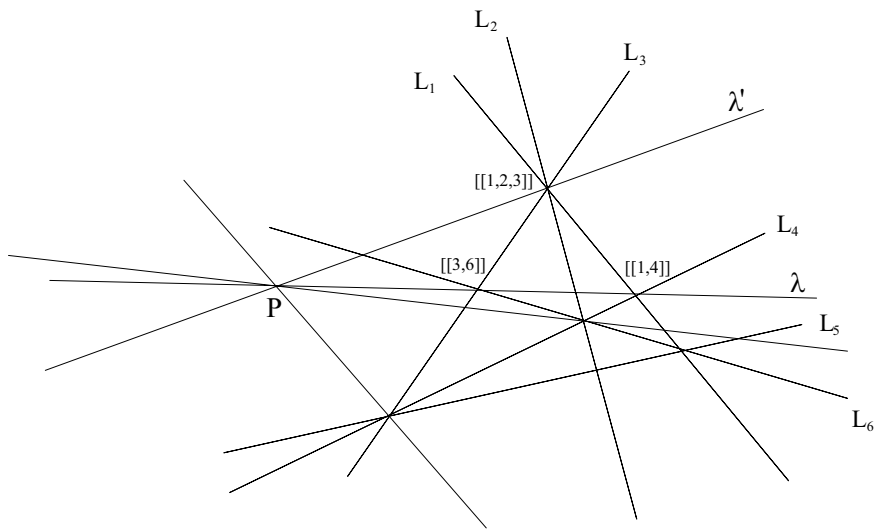} \caption{Some $K(\lambda)$ sets for the pair $($Complete quadrilateral$,P)$.} \label{f0}
\end{figure}

\label{e}
\end{example}

The set $K(\lambda)$ imposes the following constraints for the the point $[x_1:x_2:\ldots:x_{d-1}]$ in $L(\A,P)$ corresponding to $\lambda$. For
each $k$-point $[[i_1,i_2,\ldots,i_{k}]]$ in $K(\lambda)$, we have:
\begin{itemize}
\item If for some $j$, $i_j=d$, then $x_{i_l}=0$ for all $i_l \neq d$.

\item Otherwise, $x_{i_1}=x_{i_2}=\ldots=x_{i_k}\neq 0$.
\end{itemize}
For $[[i_1,\ldots,i_{k_1}]]$, $[[j_1,\ldots,j_{k_2}]]$ in $K(\lambda)$, we have that $x_{i_a} \neq x_{j_b}$, otherwise $[[i_1,\ldots,i_{k_1}]]$
and $[[j_1,\ldots,j_{k_2}]]$ would not be distinct points in $\lambda$. We will work out various examples when we compute nets in the next
sections.

Assume we know the combinatorial data of $(\A,P)$, but we do not know whether is realizable in $\P^2$ over some field $\K$. Then, this
realization question is equivalent to the realization question of $L(\A,P)$ over $\K$. If we are only interested in the line arrangement $\A$,
the point $P$ introduces unnecessary dimensions which makes the realization question harder. Instead, we consider the new pair $(\A',P')$, where
$P' \in \A$ and the lines of $\A'$ are the lines in $\A$ not containing $P'$ in a certain order. Now, the line $L(\A',P')$ corresponding to
$(\A',P')$ is in $\P^{d'-2}$, and $d'<d$. So we have less dimensions to work with, and $L(\A',P')$ completely represents our arrangement $\A$,
by keeping track of $P'$.

If we take $P'$ as a $k$-point with $k$ large, the previous observation will be important to simplify computations to prove or disprove the
realization of $\A$. In addition, we find a moduli space for the combinatorial type of $\A$, forgetting the artificial point $P$. Again, by
combinatorial type we mean the data given by some of the intersection of its lines.

In the next two sections, we will compute some special configurations by means of the line $L(\A,P)$. We make the following choices to write
down equations for the lines in $\A$:

\begin{itemize}
\item The point $P$ will be always $[0:0:1]$. \item The arrangement $\A$ will be formed by $\{ L_1,\ldots,L_d \}$, where $L_i$ are the lines of
$\A$, and also their linear polynomials $L_i(x,y,z)=(a_ix+b_iy+z)$ for every $i\neq d$, and $L_d=(z)$.
\end{itemize}

With these assumptions, it is easy to check that the corresponding line $L(\A,P)$ in $\P^{d-2}$ is $[a_it+b_iu]_{i=1}^{d-1}$, where $[t:u] \in
\P^1$.

%--------------------------------------------------------------------------------------------------------------------------------------------
\section{$(p,q)$-nets in $\P^2$.}\label{s3}

We now introduce a specific type of line arrangements in $\P^2$ which are called nets. Our main references are \cite{DeKe74}, \cite{LibYuz00},
\cite{YuzNets04}, \cite{kawa07}, \cite{Stipins1}, and \cite{Stipins2}. We begin with the definition of a net taken from \cite{Stipins1}.

\begin{defi}
Let $p \geq 3$ be an integer. A \underline{$p$-net} in $\P^2$ is a $(p+1)$-tuple $(\A_1,...,\A_p,\X)$, where each $\A_i$ is a nonempty finite
set of lines of $\P^2$ and $\X$ is a finite set of points of $\P^2$, satisfying the following conditions:
\begin{itemize}
\item[(1)] The $\A_i$ are pairwise disjoint. \item[(2)] The intersection point of any line in $\A_i$ with any line in $\A_j$ belongs to $\X$ for
$i\neq j$. \item[(3)] Through every point in $\X$ there passes exactly one line of each $\A_i$.
\end{itemize}
\label{d4}
\end{defi}

One can prove that $|\A_i|=|\A_j|$ for every $i,j$ and $|\X|=|\A_1|^2$ (see \cite{Stipins1}, \cite{YuzNets04}). Let us denote $|\A_j|$ by $q$,
this is the \underline{degree} of the net. Thus, if we use classical notation (see for example \cite{Do04} or \cite{Gr06}), a $p$-net of degree
$q$ is a $(q^2_p,pq_q)$ configuration. Following \cite{Stipins1} and \cite{YuzNets04}, we denote a $p$-net of degree $q$ by
\underline{$(p,q)$-net}. We label the lines of $\A_i$ by $\{ L_{q(i-1)+j} \}_{j=1}^q$ for all $i$, and define the arrangement
$\A=\{L_1,L_2,...,L_{pq} \}$. We assume $q\geq 2$ to get rid of the trivial arrangement, which is actually not considered in Definition
\ref{d1}.

Assume for now that we work over an algebraically closed field $\K$. A $(p,q)$-net $\A = (\A_1,...,\A_p,\X)$ defines a unique pencil of curves
$\PP(\A)$ of degree $q$ as follows. Take any two sets of lines $\A_i$ and $\A_j$. Consider $\A_i$ and $\A_j$ as the equations which define them,
i.e., the multiplication of its lines. Then, the pencil is defined as $$\PP(\A)= \{ u \A_i + t \A_j \ : \ [u:t] \in \P^1 \}.$$ This is
well-defined. Take $\A_k$ with $k \neq i,j$, and a point $Q$ in $\A_k \setminus \X$. Then, there exists $[u:t] \in \P^1$ such that $u \A_i(Q) +
t \A_j(Q)=0$. We write $\B=u \A_i + t \A_j$, which is a curve of degree $q$ containing $\X \cup \{Q \}$. If $\A_k$ and $\B$ do not have common
factors, we have, by Bezout's Theorem, that $\A_k$ belongs to $\PP(\A)$ ($\A_k$ is $\B$ times a non-zero constant). This proves the independence
of the choice of $i,j$ to define $\PP(\A)$. If $\A_k$ and $\B$ have a non-trivial common factor, then it has to be formed by the multiplication
of $0<q_1<q$ lines in $\A_k$. In this way, this common factor $C$ contains exactly $qq_1$ points of $\X$. Therefore, if $\B=CF$ and $\A_k=CG$,
the set $\{F=0 \} \cap \{G=0 \}$ has at least $q(q-q_1)>(q-q_1)^2$ points, $\deg(F)=\deg(G)=q-q_1$, and $gcd(F,G)=1$. This is impossible by
Bezout's Theorem.

In addition, if the characteristic of $\K$ is zero, the general member of this pencil is smooth \cite[p. 272]{Hartshorne}, i.e., outside of
finitely many points in $\P^1$, $uA_i+tA_j$ is a smooth plane curve. Hence, after we blow up the $q^2$ points in $\X$ we obtain a fibration of
curves of genus $\frac{(q-1)(q-2)}{2}$ with at least $p$ completely reducible fibers. This fibration leads to the following restriction on nets
defined over $\C$, due to Yuzvinsky \cite{YuzNets04} (see \cite{PeYuz07} for the higher dimensional analogue). The proof is a simple topological
argument which uses the topological Euler characteristic of the fibration.

\begin{prop}
For an arbitrary $(p,q)$-net in $\P^2$ defined over $\C$, the only possible values for $(p,q)$ are: $(p=3,q\geq 2)$, $(p=4,q\geq 3)$ and $(p=5,q
\geq 6)$. \label{p2}
\end{prop}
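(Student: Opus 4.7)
The plan is to make the pencil $\PP(\A)$ into a genuine fibration, compute its topological Euler characteristic in two different ways, and derive a numerical inequality that forces $p \leq 5$ with the stated lower bounds on $q$.

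First, I would resolve the base locus. The pencil $\PP(\A)$ has base scheme equal to $\A_i \cap \A_j = \X$ of length $q^2$ (for any $i\neq j$), because at each point of $\X$ exactly one line of $\A_i$ and one line of $\A_j$ meet transversally. Let $\pi: Y \to \P^2$ be the blow-up at the $q^2$ points of $\X$, with exceptional divisors $E_P$. The pencil lifts to a base-point-free morphism $f: Y \to \P^1$. Writing $A_i$ for the product of the linear forms defining $\A_i$, the total transform of $\{A_i=0\}$ is $\sum_{j} \tilde L_{i,j} + \sum_{P\in\X} E_P$ (each $L_{i,j}$ meets $\X$ in $q$ points), and subtracting the base divisor $\sum_P E_P$ shows the fiber corresponding to $\A_i$ is the reduced divisor $F_i = \tilde L_{i,1}+\cdots+\tilde L_{i,q}$.

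Next, I would compute the two expressions for $e(Y)$. On the one hand, $e(Y) = e(\P^2) + q^2 = 3 + q^2$. On the other hand, Bertini in characteristic $0$ makes the general member of the pencil a smooth plane curve of degree $q$, so the general fiber $F$ of $f$ is smooth of genus $g=\tfrac{(q-1)(q-2)}{2}$ and $e(F) = 3q-q^2$. Using the standard additivity for fibrations over $\P^1$,
\[
e(Y) = 2\,e(F) + \sum_{s} \bigl(e(F_s) - e(F)\bigr),
\]
where the sum runs over the finitely many singular fibers. For each completely reducible fiber $F_i$, its $q$ components are smooth rational curves, and any two of them meet (transversally) at a single point of $\P^2 \setminus \X$ which is not affected by $\pi$; hence by inclusion–exclusion $e(F_i) \geq 2q - \binom{q}{2}$, so $e(F_i)-e(F) \geq \binom{q}{2}$. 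A standard fact (semicontinuity of Euler characteristic for a flat family of curves, or equivalently the Zeuthen–Segre type inequality) gives $e(F_s)-e(F)\geq 0$ for every singular fiber.

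Combining, $3+q^2 \geq 2(3q-q^2) + p\binom{q}{2}$, which simplifies to $3(q-1)^2 \geq p\,\tfrac{q(q-1)}{2}$, i.e.\ $pq \leq 6(q-1)$. Since $q\geq 2$, this gives $p \leq 5$; and solving $p \leq 6 - 6/q$ in each case yields exactly the cases $(p=3,\ q\geq 2)$, $(p=4,\ q\geq 3)$, $(p=5,\ q\geq 6)$. The main obstacle I anticipate is the nonnegativity of the contributions from singular fibers that are \emph{not} the $p$ completely reducible ones: without this one cannot drop these terms in the inequality. This is the step where complex coefficients are essential — it fails in positive characteristic because Bertini's smoothness may fail and $e(F_s)-e(F)$ need not be nonnegative, which is consistent with the $(p,q)$-nets in char $p>0$ mentioned in the introduction.
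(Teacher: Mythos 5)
Your argument is correct and is exactly the approach the paper points to: it only cites Yuzvinsky and describes the proof as a topological Euler-characteristic count for the fibration obtained by blowing up the $q^2$ points of $\X$, which is precisely what you carry out (the key inequality $3(q-1)^2 \geq p\binom{q}{2}$, i.e.\ $p \leq 6-6/q$, with the nonnegativity of the remaining fiber contributions in characteristic zero). So your proposal fills in the same proof rather than giving a different one.
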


The combinatorial data which defines $(p,q)$-nets can be expressed using Latin squares. A \underline{Latin square} is a $q\times q$ table filled
with $q$ different symbols (in our case numbers from $1$ to $q$) in such a way that each symbol occurs exactly once in each row and exactly once
in each column. They are the multiplication tables of finite quasigroups. Let $\A = (\A_1,...,\A_p,\X)$ be a $(p,q)$-net. The $q^2$ $p$-points
in $\X$ are determined by $(p-2)$ $q\times q$ Latin squares which form an orthogonal set, as explained for example in \cite{Stipins1} or
\cite{kawa07}.

Although we have defined nets as arrangements of lines already in $\P^2$, we will first ``think combinatorially" about the $(p,q)$-net through
this orthogonal set of $(p-2)$ Latin squares, and then we will attempt to prove or disprove its realization on $\P^2$ over some field. This is
the strategy from now on.

\begin{example}
In this example we use our correspondence to reprove the existence of the famous \underline{Hesse arrangement}. This $(4,3)$-net has nice
applications in algebraic geometry (see for example \cite{HiLines83,ArDo06,Urzua4}). Let us denote this net by $\A = \A_1 \cup \A_2 \cup \A_3
\cup \A_4$, with $\A_i = \{L_{3i-2}, L_{3i-1},L_{3i} \}$. By relabelling the lines of $\A$, we may assume that the combinatorial data is given
by the following set of orthogonal Latin squares.

\begin{center}
\vspace{0.3 cm}{\tiny
\begin{tabular}{|rcccc}
\hline
   1 & 2 & 3     \\
   2 & 3 & 1     \\
   3 & 1 & 2     \\

\end{tabular}
\hspace{1.5 cm}
\begin{tabular}{|rcccc}
\hline
   1 & 2 & 3     \\
   3 & 1 & 2     \\
   2 & 3 & 1     \\

\end{tabular}}
\vspace{0.3 cm}
\end{center}

These Latin squares give the intersections of $\A_3$ and $\A_4$ respectively with $\A_1$ (columns) and $\A_2$ (rows) (see \cite{kawa07} or
\cite{Stipins1}). For example, the left one tell us that $L_2$, $L_6$ and $L_7$ (values) have a common point of incidence. The right one says
$L_2$, $L_6$ and $L_{12}$ have also non-empty intersection. Hence, $[[2,6,7,12]] \in \X$. In this way, we find $\X$, which is completely
described in the following tables.

\begin{center} \vspace{0.3 cm}
\begin{tabular}{c|ccc}

         & $L_1$ & $L_2$ & $L_3$ \\ \hline
   $L_4$ & $L_7$ & $L_8$ & $L_9$ \\
   $L_5$ & $L_8$ & $L_9$ & $L_7$ \\
   $L_6$ & $L_9$ & $L_7$ & $L_8$ \\

\end{tabular}
\hspace{1.5 cm}
\begin{tabular}{c|ccc}

         & $L_1$ & $L_2$ & $L_3$ \\ \hline
   $L_4$ & $L_{10}$ & $L_{11}$ & $L_{12}$ \\
   $L_5$ & $L_{12}$ & $L_{10}$ & $L_{11}$ \\
   $L_6$ & $L_{11}$ & $L_{12}$ & $L_{10}$ \\

\end{tabular}
\vspace{0.3 cm}
\end{center}

We now consider the new arrangement of lines $\A'=\A\setminus \{L_3,L_4,L_9,L_{12} \}$ together with the point $P=[[3,4,9,12]]$. We rename the
twelve lines in the following way: $\A'= \{L'_1=L_1, L'_2=L_2, L'_3=L_5, L'_4=L_6, L'_5=L_7, L'_6=L_8, L'_7=L_{10}, L'_8=L_{11} \}$ and the
lines passing through $P$, $\alpha=L_3$, $\beta=L_4$, $\gamma=L_9$, and $\delta=L_{12}$. By our correspondence, we have a line $L(\A',P)$ in
$\P^6$ for the pair $(\A',P)$, and it passes through these distinguished four points $\alpha$, $\beta$, $\gamma$, and $\delta$ (we abuse the
notation, these lines correspond to points in $L(\A',P)$). Then, $K(\alpha)=\{ [[4,6,7]], [[3,5,8]]\}$, $K(\beta)=\{[[2,6,8]],[[1,5,7]] \}$,
$K(\gamma)$ $=\{[[2,3,7]],[[1,4,8]] \}$, and $K(\delta)= \{[[2,4,5]], [[1,3,6]] \}$. Hence, we write:$$ \alpha=[a_1:a_2:0:1:0:1:1], \ \ \ \
\beta=[1:0:a_3:a_4:1:0:1]$$ $$\gamma=[0:1:1:0:a_5:a_6:1], \ \ \ \ \delta=[1:a_7:1:a_7:a_7:1:a_8]
$$ for some numbers $a_i$ (with extra restrictions), and we take $L(\A',P): \alpha t + \beta u$, $[t:u] \in \P^1$.

For some $[t:u]$, we have the equation $ \alpha t + \beta u = \gamma$, and from this we obtain:
$$ a_1=\frac{w-1}{w} \ \ a_2= \frac{1}{w} \ \ a_3= \frac{1}{1-w} \ \ a_4= \frac{w}{w-1} \ \ a_5= 1-w \ \ a_6=w,$$ where $w$ is a parameter.

For another pair $[t:u]$, we have $ \alpha t + \beta u = \delta$, and so $w^2-w+1=0$, $a_7=\frac{1}{w}$, and $a_8=\frac{1+w}{w}$. Therefore, our
field of definition needs to have roots for the equation $w^2-w+1$. For instance, over $\C$, we take $w=e^{\frac{\pi \sqrt{-1}}{3}}$, and then
$L(\A',P)$ is: $$[w-1:1:0:w:0:w:w]t+[w-1:0:-1:w:w-1:0:w-1]u.$$ According to our choices at the end of Section \ref{s2}, we write down the lines
of $\A$ as: $$\{ L_1=((w-1)x+ y + z), L_2=(x + z), L_3=(y) \} \ \ \{ L_4=(x), L_5=(-y + z), L_6=(wx + w y + z) \}$$
$$ \{ L_7=(y+w^2z), L_8=(wx+z), L_9=(wx+y) \} \ \ \{ L_{10}=(x+wy+\frac{1}{w}z), L_{11}=(z), L_{12}=(x-wy) \}.$$ Notice that the lines in $\A$
corresponding to $\alpha$, $\beta$, $\gamma$, and $\delta$ are $ux-ty=0$, where $[t:u]$ is the corresponding point in $\P^1$ for each of them,
as points in $L(\A',P)$. \label{e1}
\end{example}

\begin{example}
In this example, we show that there are no $(4,4)$-nets over fields of characteristic $\neq 2$. This fact has independently been shown in
\cite{DMWZ07} over $\C$. We start supposing their existence, let $\A=\{\A_i \}_{i=1}^4$ be such a net. Again, by relabelling the lines of $\A$,
we may assume that the orthogonal set of Latin squares is:
\begin{center}
\vspace{0.3 cm}{\tiny
\begin{tabular}{|rcccc}
\hline
   1 & 2 & 3 & 4  \\
   2 & 1 & 4 & 3  \\
   3 & 4 & 1 & 2  \\
   4 & 3 & 2 & 1
\end{tabular}
\hspace{1.5 cm}
\begin{tabular}{|rcccc}
\hline
   1 & 2 & 3 & 4  \\
   3 & 4 & 1 & 2  \\
   4 & 3 & 2 & 1  \\
   2 & 1 & 4 & 3
\end{tabular}}
\vspace{0.3 cm}
\end{center}

We consider $(\A',P)$ defined by the arrangement of twelve lines $\A'=\A\setminus \{L_4,L_5,L_{12},L_{16} \}$, and the point $P=[[4,5,12,16]]$.
The lines of $\A'$ are $L'_1=L_1$, $L'_2=L_2$, $L'_3=L_3$, $L'_4=L_6$, $L'_5=L_7$, $L'_6=L_8$, $L'_7=L_9$, $L'_8=L_{10}$, $L'_9=L_{11}$,
$L'_{10}=L_{13}$, $L'_{11}=L_{14}$, and $L'_{12}=L_{15}$. The special lines are $\alpha=L_4$, $\beta=L_5$, $\gamma=L_{12}$, and $\delta=L_{16}$.
Hence, we have that $$\alpha=[a_1:a_2:a_3:1:a_4:0:0:a_4:1:a_4:1] \ \ \ \beta=[1:b_4:0:b_1:b_2:b_3:1:b_4:0:1:b_4]$$
$$\gamma=[1:0:c_4:c_4:0:1:c_1:c_2:c_3:c_4:1] \ \ \ \delta=[d_1:1:d_4:1:d_1:d_4,1:d_4:d_1:d_2:d_3]$$ as points in $L(\A',P)$, which we write as
$\alpha t+ \beta u$, $[t:u] \in \P^1$. Let $c_1=a$, $c_2=b$ and $c_3=c$. Since $\gamma \in L(\A',P)$, we have:
$$ a_1= \frac{1-a}{c} \ \ a_2= \frac{c-1}{c} \ \ a_3= \frac{a+b+c-1}{c} \ \ a_4= \frac{b+c-1}{c} \ \ c_4=a+b+c-1$$
$$ b_1= \frac{a+b-1}{a} \ \ b_2= \frac{1-b-c}{a} \ \ b_3= \frac{1}{a} \ \ b_4= \frac{1-c}{a}.$$ Also, since $\delta \in L(\A',P)$, have $ad_4=1$
and $ad_1+b=1$, plus the following equations: $(1): d_1(1-c)=1-b-c$, $(2): d_1(1-b)(c-1)+d_1c(1-c)=(1-b)c$, $(3): (1-b)(1+d_4(b+c-1))=d_4c$, and
$(4):c^2=(1-b)(b+c-1)$ among others. These equations are enough to produce a contradiction. By isolating $d_1$ in $(1)$, replacing it in $(2)$,
and using $(4)$, we get $c^3=(1-b)^3$ which requires a $3$rd primitive root of $1$. Say $w$ is such, so $b=1-w c$. Then, by $(3)$, we get
$w^2(1+2c)=w-1$. Since the characteristic of our field is not $2$, we have $c= \frac{1}{w}$, and so $b=0$, and $a=1$. This gives $a_1=0$, which
is a contradiction, because it would imply that $L_1 \cap L_4 \cap L_8 \cap L_9 \cap L_{15} \neq \emptyset $. See next example for the char. $2$
case.

\label{e2}
\end{example}

\begin{example}
Positive characteristic gives more freedom for the realization of nets compared to Proposition \ref{p2}, and the previous examples. Let $q$ be a
prime number, and let $\K$ be a field with $m=q^n$ elements. In $\P^2_{\K}$, we have $m^2+m+1$ points with coordinates in $\K$, and there are
$m^2+m+1$ lines such that through each of these points passes exactly $m+1$ of these lines, and each of these lines contains exactly $m+1$ of
these points \cite[p. 65]{Hirsch79}. By eliminating one of these lines, we obtain a $(m+1,m)$-net. Each of the $m+1$ members of this net has $m$
lines intersecting at one point, and so $t_m=m+1$, $t_{m+1}=m^2$, and $t_k=0$ otherwise. Hence, in positive characteristic, there are $p$-nets
for all $p\geq 3$. If we want a $(4,4)$-net, one takes $q=2$ (necessary by Example \ref{e2}) and $n=2$, and considers the corresponding
$(5,4)$-net. We now eliminate one of its members to obtain a $(4,4)$-net. \label{e3}
\end{example}

In \cite{Stipins2}, Stipins proves that there are no $5$-nets over $\C$ (see \cite{Yuz08} for a generalization of his result). His proof does
not use the combinatorics given by Latin squares. We will see that this issue matters for the realization of $(3,6)$-nets, and so, it would be
interesting to know if Latin squares are relevant or not for the possible realization of $4$-nets over $\C$. It is believed that, except for the
Hesse arrangement, $(4,q)$-nets do not exist over $\C$. In this way, by Proposition \ref{p2}, the only cases left over $\C$ would be $3$-nets.
In \cite{YuzNets04}, it is proved that for every finite subgroup $H$ of a smooth elliptic curve, there exists a $3$-net over $\C$ corresponding
to the Latin square of the multiplication table of $H$. In the same paper, the author proves that there are no $(3,8)$-nets associated to the
group $\Z/2\Z \oplus \Z/2\Z \oplus \Z/2\Z$. In \cite{Stipins1}, it can be found a classification of $(3,q)$-nets for $q \leq 5$. In the next
section we classify $(3,q)$-nets for $q \leq 6$, and the $(3,8)$-nets associated to the Quaternion group.

\begin{obs} (\textbf{Main classes of Latin squares})
As we explained before, a $q\times q$ Latin square gives the set $\X$ for a $(3,q)$-net $\A=\{ \A_1,\A_2,\A_3 \}$. What if we are interested
only in the realization of $\A$ in $\P^2$ as a curve, i.e., without labelling lines? Then, we divide the set of all $q \times q$ Latin squares
into the so-called \underline{main classes} (see \cite{DeKe74} or \cite{kawa07}).

For a given $q\times q$ Latin square $M$ corresponding to $\A$, by rearranging rows, columns and symbols of $M$, we obtain a new labelling for
the lines in each $\A_i$. If we write $M$ in its orthogonal array representation, i.e. $M=\{(r,c,s): r=\text{row number}, \ c=\text{column
number}, \ s=\text{symbol number} \}$, we can perform six operations on $M$, each of them a permutation of $(r,c,s)$ which translates into
relabelling the members $\{\A_1, \A_2, \A_3 \}$, and so we obtain the same curve in $\P^2$. We can partition the set of all $q\times q$ Latin
squares in main classes (also called Species) which means: if $M,N$ belong to the same class, then we can obtain $N$ by applying a finite number
of the above operations to $M$. In what follows, we will choose one member from each class. The following table shows the number of main classes
for small $q$.

\vspace{0.1cm}
\label{r2}
\end{obs}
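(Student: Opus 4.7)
The content of this remark that requires verification is twofold: first, that the six permutations of the coordinate triple $(r,c,s)$, together with independent row, column, and symbol permutations, always send a Latin square to a Latin square; and second, that the resulting orbits partition the set of $q \times q$ Latin squares into classes whose members realize the same plane curve $\bigcup_i \bigcup_{L \in \A_i} L$ after forgetting the ordering of the three families $\A_1,\A_2,\A_3$ and the labellings of the lines inside each.

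The plan is to exploit the symmetric description of Latin squares via orthogonal arrays. I would first fix notation: given a $3$-net $\A = (\A_1,\A_2,\A_3,\X)$ with labellings $\A_i = \{L^{(i)}_j\}_{j=1}^q$, write $M_{rc}=s$ iff $L^{(1)}_r \cap L^{(2)}_c \cap L^{(3)}_s \in \X$, and let $M^{OA} = \{(r,c,s) : M_{rc}=s\} \subset \{1,\ldots,q\}^3$. The key observation is that a size-$q^2$ subset $T \subset \{1,\ldots,q\}^3$ is the orthogonal array of a Latin square if and only if each of the three projections $T \to \{1,\ldots,q\}^2$ obtained by forgetting one coordinate is a bijection, a characterization manifestly symmetric in the three coordinates.

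With this setup, Step 1 would be to check that a row (respectively column, symbol) permutation by $\tau \in S_q$ produces the Latin square of the same $3$-net after merely relabelling $\A_1$ (respectively $\A_2$, $\A_3$) by $\tau$; this is immediate from the definition of $M^{OA}$, and the underlying line arrangement in $\P^2$ is left unchanged because it is the unordered union of the same lines. Step 2 is to check that for $\pi \in S_3$, applying $\pi$ to each triple of $M^{OA}$ yields the orthogonal array of another Latin square $M^\pi$, using the symmetric characterization above; this $M^\pi$ is the Latin square associated to the same $3$-net with the three families permuted by $\pi$, so again the underlying plane curve is unchanged. Step 3 would combine Steps 1 and 2 into an action of the wreath product $S_q \wr S_3 = S_q^3 \rtimes S_3$ on the set of $q \times q$ Latin squares, whose orbits are by definition the main classes; this automatically gives a partition.

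The only delicate point is Step 2: one must verify that the apparently asymmetric condition defining a Latin square, namely that $s$ is a well-defined function of $(r,c)$, is equivalent to the symmetric condition that all three pairwise projections of $M^{OA}$ onto $\{1,\ldots,q\}^2$ are bijections. Once this equivalence is in hand, $S_3$ acts freely on the roles of the coordinates, making ``main class'' a notion intrinsic to the unlabelled plane curve rather than to the arbitrary choice of which family is called $\A_1$, $\A_2$, or $\A_3$.
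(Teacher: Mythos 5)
Your proposal is correct and follows essentially the same route the paper sketches: the remark itself already invokes the orthogonal array representation $M=\{(r,c,s)\}$, the six coordinate permutations, and the row/column/symbol relabellings, leaving the verification (that the three pairwise projections being bijections is the symmetric characterization of a Latin square) to the cited references \cite{DeKe74} and \cite{kawa07}. You have simply filled in that standard verification and organized the operations into an action of $S_q \wr S_3$, which matches the paper's intent.
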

\begin{center}
\begin{tabular}{|c|c|c|c|c|c|c|c|c|c|c|c|c|c|c|c|c|c|c}
\hline $q$ &  1  &  2  &  3  &  4  &  5  &  6  &  7  &  8  & 9 & 10 \\ \hline
$\#$ main classes &  1 & 1 & 1 & 2 & 2 & 12 & 147 & 283\,657 & 19\,270\,853\,541 & 34\,817\,397\,894\,749\,939\\
\hline
\end{tabular}
\end{center}
%\vspace{0.2cm}

%------------------------------------------------------------------------------------------------------------------------------------------------
\section{Classification of $(3,q)$-nets for $q\leq6$, and the Quaternion nets.}\label{s4}

In order to do this classification, we use again the trick of eliminating some lines passing through a $k$-point $P$, and considering the new
pair $(\A',P)$. We work with $(3,q)$-nets, thus $P$ is taken as a $3$-point in $\X$ (and so, we eliminate three lines from $\A$). If the
$(3,q)$-net is given by $\A=\{\A_1,\A_2,\A_3 \}$ such that $\A_i=\{L_{q(i-1)+j} \}_{j=1}^q$, then the new pair $(\A',P)$ will be given by $\A'=
\{L'_1=L_2,L'_2=L_3,\ldots,L'_{q-1}=L_q,L'_{q}=L_{q+2},L'_{q+1}=L_{q+3},\ldots,L'_{2q-2}=L_{2q},L'_{2q-1}=L_{2q+2},L'_{2q}=L_{2q+3},\ldots,L'_{3q-3}=L_{3q}
\}$, $P=L_1 \cap L_{q+1} \cap L_{2q+1}$, and $\alpha=L_1$, $\beta=L_{q+1}$, $\gamma=L_{2q+1}$. The corresponding line $L(\A',P)$ is $\alpha t +
\beta u$, $[t,u] \in \P^1$.

We obtain $\X$ from a given Latin square. Then, we fix a point $P$ in $\X$, so the locus of the line $L(\A',P)$ is actually the moduli space of
the $(3,q)$-nets with combinatorial data defined by that Latin square (or better its main class). We give in each case equations for the lines
of the nets depending on parameters coming from $L(\A',P)$.

\vspace{0.3 cm} \underline{\textbf{$(3,2)$-nets.}}

Here we have one main class given by the multiplication table of $\Z/2\Z$: {\tiny \begin{tabular}{|cc} \hline   1 & 2 \\    2 & 1 \\
\end{tabular}}. According to our set up, $(\A',P)$ is formed by an arrangement $\A'$ of three lines and $P=[[1,3,5]] \in \X$. The line
$L(\A',P)$ is actually the whole $\P^1$. This tells us that there is only one $(3,2)$-net, up to projective equivalence. The special points are
$\alpha=[1:0]$, $\beta=[0:1]$, and $\gamma=[1:1]$. This $(3,2)$-net is represented by the singular members of the pencil $\lambda z(x-y)+ \mu
y(z-x)=0$ on $\P^2$, and it is called complete quadrilateral (see Figure \ref{f0}).

\vspace{0.3 cm} \underline{\textbf{$(3,3)$-nets.}}

Again, there is one main class given by the multiplication table of $\Z/3 \Z$. $${\tiny \begin{tabular}{|ccc} \hline
1 & 2 & 3 \\
3 & 1 & 2 \\
2 & 3 & 1
\end{tabular}}$$

For $(\A',P)$ we have an arrangement of six lines $\A'$ and $P=[[1,4,7]] \in \X$, the line $L(\A',P)$ is in $\P^4$. The special points can be
taken as $\alpha= [a_1:a_2:1:0:1]$, $\beta=[1:0:b_1:b_2:1]$, and $\gamma=[1:c_1:c_1:1:c_2]$. Then, for some $[t:u] \in \P^1$, we have $\alpha
t+\beta u=\gamma$. Thus, if $a_2=a$, $b_2=b$ and $c_1=c$, we have that $\alpha=\big[ \frac{a(b-1)}{bc}:a:1:0:1 \big]$ and
$\beta=\big[1:0:\frac{bc(a-1)}{a}:b:1 \big]$. The rest of the points in $\X'$ (again, although $\A'$ is not a net, we think of $\X'$ as the set
of $3$-points in $\A'$ coming from $\X$) $[[1,3,6]]$ and $[[2,4,5]]$ give the same restriction $(a-1)(b-1)=1$, i.e., $a= \frac{b}{b-1}$.
Therefore, the line $L(\A',P)$ has two parameters of freedom, and it is given by $ \bigl[\frac{1}{c}:\frac{b}{b-1}:1:0:1 \bigr] t +
[1:0:c:b:1]u$ where $c,b$ are numbers with some restrictions (for example, $c,b\neq 0$ or $1$). Hence, we find that this family of $(3,3)$-nets
can be represented by: $L_1=(y)$, $L_2=(\frac{1}{c}x+y+z)$, $L_3=(\frac{b}{b-1}x+z)$, $L_4=(x)$, $L_5=(x+cy+z)$, $L_6=(by+z)$,
$L_7=(x+c(1-b)y)$, $L_8=(x+y+z)$, and $L_9=(z)$.

\vspace{0.3 cm} \underline{\textbf{$(3,4)$-nets.}}

Here we have two main classes. We represent them by the following Latin squares.

\vspace{0.1 cm}
\begin{center}
{\tiny $M_1=$
\begin{tabular}{|cccc}
\hline
   1 & 2 & 3 & 4  \\
   2 & 3 & 4 & 1  \\
   3 & 4 & 1 & 2  \\
   4 & 1 & 2 & 3
\end{tabular}
\hspace{1.5 cm} $M_2=$
\begin{tabular}{|cccc}
\hline
   1 & 2 & 3 & 4  \\
   2 & 1 & 4 & 3  \\
   3 & 4 & 1 & 2  \\
   4 & 3 & 2 & 1
\end{tabular}}
\vspace{0.2 cm}
\end{center}

They correspond to $\Z/4 \Z$ and $\Z/2\Z \oplus \Z/2\Z$ respectively. We first deal with $M_1$. Then, we have
$\alpha=[a_1:a_2:a_3:1:a_4:0:1:a_4]$, $\beta=[1:b_1:0:b_2:b_3:b_4:1:b_1]$ and $\gamma=[1:c_1:c_2:c_2:c_1:1:c_3:c_4]$. Let $a_3=a$, $b_4=b$ and
$c_2=c$. By imposing $\gamma$ to $L(\A',P)$, one can find $a_1= \frac{(-1+b)a}{bc}$, $a_2=\frac{(-b_1+c_1 b)a}{bc}$, $a_4=\frac{(-b_1 + c_4
b)a}{bc}$, $b_2=\frac{-(c-c_2a)b}{a}$, $b_3=b_1-c_4b+c_1b$, and $c_3=\frac{1}{b} + \frac{c}{a}$. When we impose $L(\A',P)$ to pass through
$[[1,5,9]]$, $[[2,4,9]]$ and $[[1,4,8]]$, we obtain equations to solve for $c_4$, $c_1$, and $b_1$ respectively. After that, the restrictions
$[[2,6,7]]$, $[[3,5,7]]$, and $[[3,6,8]]$ are trivially satisfied. The line $L(\A',P)$ is parametrized by $(a,b,c)$ in a open set of $\Af^3$,
and it is given by: $a_1= \frac{a(b-1)}{bc}$, $a_2= \frac{ab}{abc+ab-a-bc}$, $a_3= a$, $a_4= \frac{a^2(b-1)}{abc+ab-a-bc}$,
$b_1=\frac{b^2(a-1)c}{abc+ab-a-bc}$, $b_2=\frac{bc(a-1)}{a}$, $b_3=\frac{abc}{abc+ab-a-bc}$ and $b_4=b$.

Similarly, for $M_2$ we have $\alpha=[a_1:a_2:a_3:1:a_4:0:1:a_4]$, $\beta=[1:b_1:0:b_2:b_3:b_4:1:b_1]$, and
$\gamma=[1:c_1:c_2:1:c_1:c_2:c_3:c_4]$. Of course, the only change with respect to the previous case is $\gamma$. By doing similar computations,
we have that $L(\A',P)$ is parametrized by $(a,b,c)$ in a open set of $\Af^3$, and it is given by: $a_1= \frac{(b-c)a}{bc}$, $a_2=
\frac{abc}{abc+ab-bc-ac}$, $a_3= a$, $a_4= \frac{a^2(b-c)}{abc+ab-bc-ac}$, $b_1=\frac{b^2(a-c)}{abc+ab-bc-ac}$, $b_2=\frac{b(a-c)}{ac}$,
$b_3=\frac{abc}{abc+ab-bc-ac}$, and $b_4=b$ (see \cite[p. 11]{Stipins1} for more information about this net).

Hence, the lines for the corresponding $(3,4)$-nets for $M_r$ can be represented by: $L_1=(y)$, $L_2=(a_1 x+y+z)$, $L_3=(a_2 x+ b_1 y + z)$,
$L_4=(a_3 x + z)$, $L_5=(x)$, $L_6=(x+b_2 y+z)$, $L_7=(a_4 x+b_3 y+z)$, $L_8=(b_4 y+z)$, $L_9=(a x - b c^{2-r}y)$, $L_{10}=(x+y+z)$,
$L_{11}=(a_4x + b_1y +z)$, and $L_{12}=(z)$. For example, if we evaluate the equations for the cyclic type $M_1$ at $a= \frac{1+i}{2}$,
$b=\frac{1-i}{2}$, and $c=-i$ (where $i=\sqrt{-1}$), we obtain the well-known net: $\A_1= \{ y, (1+i)x+2y+2z, (1+i)x+y+2z, (1+i)x+2z \}$,
$\A_2=\{ x, 2x+(1-i)y+2z, x+(1-i)y+2z, (1-i)y+2z \}$ and $\A_3= \{ x+y, x+y+z, x+y+2z, z \}$. This net is projectively equivalent of the one
given by the plane curve $(x^4-y^4)(y^4-z^4)(x^4-z^4)=0$, known as CEVA$(4)$ \cite[p. 435]{Do04}.

\vspace{0.3 cm} \underline{\textbf{$(3,5)$-nets.}}

We have two main classes, and we represent them by the following Latin squares.
\begin{center}
\vspace{0.1 cm}{\tiny $M_1=$
\begin{tabular}{|ccccc}
\hline
   1 & 2 & 3 & 4 & 5 \\
   2 & 3 & 4 & 5 & 1 \\
   3 & 4 & 5 & 1 & 2 \\
   4 & 5 & 1 & 2 & 3 \\
   5 & 1 & 2 & 3 & 4
\end{tabular}
\hspace{1.5 cm} $M_2=$
\begin{tabular}{|ccccc}
\hline
   1 & 2 & 3 & 4 & 5 \\
   2 & 1 & 4 & 5 & 3 \\
   3 & 5 & 1 & 2 & 4 \\
   4 & 3 & 5 & 1 & 2 \\
   5 & 4 & 2 & 3 & 1
\end{tabular}}
\vspace{0.1 cm}
\end{center}

The Latin square $M_1$ corresponds to $\Z/5\Z$. As before, for $M_1$ and $M_2$ we have that $\alpha= [a_1:a_2:a_3:a_4:1:a_5:a_6:0:1:a_5:a_6]$
and $\beta=[1:b_1:b_2:0:b_3:b_4:b_5:b_6:1:b_1:b_2]$, but for $M_1$, $\gamma=[1:c_1:c_2:c_3:c_3:c_2:c_1:1:c_4:c_5:c_6]$, and for $M_2$,
$\gamma=[1:c_1:c_2:c_3:1:c_1:c_2:c_3:c_4:c_5:c_6]$.

In the case of $M_1$, after we impose $\gamma$ to $L(\A',P)$, we use the conditions $[[2,3,5]]$, $[[4,8,11]]$, $[[2,6,12]]$, $[[2,8,9]]$, and
$[[3,8,10]]$ to solve for $b_2$, $c_6$, $c_5$, $b_1$, and $c_2$ respectively. After that we have four parameters left: $a_4=a$, $b_6=b$,
$c_3=c$, and $c_1=d$, and we get the following constrain for them:$$b^2(a-1)(d-c)(c-ad)+ b(-d^2a+dc+2d^2a^2-2da^2c-da+ca-dc^2+c^2da) +
ad(ca-da+1-c)= 0.$$

Hence, the $(3,5)$-nets for $M_1$ are parametrized by an open set of the hypersurface in $\Af^4$ defined by this equation. The values for the
variables are:
$$ a_1=\frac{a(b-1)}{bc} \ \ a_2=\frac{ab(d-1)}{a-ba+bc} \ \ a_3=\frac{a(d-db+bc)}{c^2(a-1)b} \ \ a_4=a $$ $$a_5=\frac{a^2(d-1)(d-db+bc)}{(a-ba+bc)
(a-1)cd} \ \ a_6= \frac{ad(b-1)}{bc}\ \ b_1=\frac{b(da-adb+bc)}{a-ba+bc} \ \ b_2=\frac{d-db+bc}{c}$$ $$b_3=\frac{bc(a-1)}{a} \ \ b_4=
\frac{(da-adb+bc)(d-db+bc)a}{(a-ba+bc)(a-1)cd} \ \ b_5=d \ \ b_6= b. $$

In the case of $M_2$, we obtain a three dimensional moduli space of $(3,5)$-nets as well. It is parametrized by $(a,b,c)$ in an open set of
$\Af^3$ such that $a_4=a$, $b_6=b$ and $c_1=c$, and:
$$ a_1=\frac{a^2(1-b)}{b(ab-a-b)} \ \ a_2=c \ \ a_3=\frac{(-a^2+a^2b+cba-ab-cb)b}{(ab+cb-a-b)(ab-a-b)} \ \ a_4=a $$
$$a_5=\frac{(a^2-a^2b-cba+ab+cb)a}{(ab-a-b)^2} \ \ a_6= \frac{c(b-1)a}{-a+ab+cb-b}$$ $$b_1=\frac{cb^2(1-a)}{a(ab-a-b)} \ \ b_2=\frac{(a-ab+b-c)b^2}{(-a+ab+cb-b)(ab-a-b)}$$
$$b_3=\frac{b^2(1-a)}{a(ab-a-b)} \ \ b_4= \frac{ab(ab-a-b+c)}{(ab-a-b)^2} \ \ b_5=\frac{cb(a+b-ab)}{a(ab-a+bc-b)} \ \ b_6= b. $$

To obtain the lines for the nets corresponding to $M_r$, we just evaluate: $L_1=(y)$, $L_2=(a_1x+y+z)$, $L_3=(a_2x+b_1y+z)$,
$L_4=(a_3x+b_2y+z)$, $L_5=(a_4x+z)$, $L_6=(x)$, $L_7=(x+b_3y+z)$, $L_8=(a_5x+b_4y+z)$, $L_9=(a_6x+b_5y+z)$, $L_{10}=(b_6y+z)$,
$L_{11}=(ax-bc^{2-r}y)$, $L_{12}=(x+y+z)$, $L_{13}=(a_5x+b_1y+z)$, $L_{14}=(a_6x+b_2y+z)$, and $L_{15}=(z)$. These two $3$ dimensional families
of $(3,5)$-nets appear in \cite{Stipins1}. We notice that both families of $(3,5)$-nets have members defined over $\Q$. For the case $M_1$, we
can make $b^2$ disappear from the equation by declaring $c=ad$ (the relations $a=1$ and $d=c$ are not allowed). Then, $b=
\frac{2da-1-da^2}{2da-2da^2-1+a-d^2a+d^2a^2}$, and it can be checked that for suitable $a,d \in \Z$ the conditions for being $(3,5)$-net are
satisfied.

\vspace{0.3 cm} \underline{\textbf{$(3,6)$-nets.}}

We have twelve main classes of Latin squares to check. The following is a list showing one member of each class. It was taken from \cite[pp.
129-137]{DeKe74}.
\begin{center}
\vspace{0.3 cm}{\tiny $M_1=$
\begin{tabular}{|cccccc}
\hline
   1 & 2 & 3 & 4 & 5 & 6\\
   2 & 3 & 4 & 5 & 6 & 1\\
   3 & 4 & 5 & 6 & 1 & 2\\
   4 & 5 & 6 & 1 & 2 & 3\\
   5 & 6 & 1 & 2 & 3 & 4\\
   6 & 1 & 2 & 3 & 4 & 5\\
\end{tabular}
$M_2=$
\begin{tabular}{|cccccc}
\hline
   1 & 2 & 3 & 4 & 5 & 6\\
   2 & 1 & 5 & 6 & 3 & 4\\
   3 & 6 & 1 & 5 & 4 & 2\\
   4 & 5 & 6 & 1 & 2 & 3\\
   5 & 4 & 2 & 3 & 6 & 1\\
   6 & 3 & 4 & 2 & 1 & 5\\
\end{tabular}
$M_3=$
\begin{tabular}{|cccccc}
\hline
   1 & 2 & 3 & 4 & 5 & 6\\
   2 & 3 & 1 & 5 & 6 & 4\\
   3 & 1 & 2 & 6 & 4 & 5\\
   4 & 6 & 5 & 2 & 1 & 3\\
   5 & 4 & 6 & 3 & 2 & 1\\
   6 & 5 & 4 & 1 & 3 & 2\\
\end{tabular}
$M_4=$
\begin{tabular}{|cccccc}
\hline
   1 & 2 & 3 & 4 & 5 & 6\\
   2 & 1 & 4 & 3 & 6 & 5\\
   3 & 4 & 5 & 6 & 1 & 2\\
   4 & 3 & 6 & 5 & 2 & 1\\
   5 & 6 & 1 & 2 & 4 & 3\\
   6 & 5 & 2 & 1 & 3 & 4\\
\end{tabular}}
\end{center}
\begin{center}
\vspace{0.3 cm}{\tiny $M_5=$
\begin{tabular}{|cccccc}
\hline
   1 & 2 & 3 & 4 & 5 & 6\\
   2 & 1 & 4 & 3 & 6 & 5\\
   3 & 4 & 5 & 6 & 1 & 2\\
   4 & 3 & 6 & 5 & 2 & 1\\
   5 & 6 & 2 & 1 & 4 & 3\\
   6 & 5 & 1 & 2 & 3 & 4\\
\end{tabular}
$M_6=$
\begin{tabular}{|cccccc}
\hline
   1 & 2 & 3 & 4 & 5 & 6\\
   2 & 1 & 4 & 5 & 6 & 3\\
   3 & 6 & 2 & 1 & 4 & 5\\
   4 & 5 & 6 & 2 & 3 & 1\\
   5 & 3 & 1 & 6 & 2 & 4\\
   6 & 4 & 5 & 3 & 1 & 2\\
\end{tabular}
$M_7=$
\begin{tabular}{|cccccc}
\hline
   1 & 2 & 3 & 4 & 5 & 6\\
   2 & 1 & 4 & 3 & 6 & 5\\
   3 & 5 & 1 & 6 & 4 & 2\\
   4 & 6 & 5 & 1 & 2 & 3\\
   5 & 3 & 6 & 2 & 1 & 4\\
   6 & 4 & 2 & 5 & 3 & 1\\
\end{tabular}
$M_8=$
\begin{tabular}{|cccccc}
\hline
   1 & 2 & 3 & 4 & 5 & 6\\
   2 & 1 & 6 & 5 & 3 & 4\\
   3 & 6 & 1 & 2 & 4 & 5\\
   4 & 5 & 2 & 1 & 6 & 3\\
   5 & 3 & 4 & 6 & 1 & 2\\
   6 & 4 & 5 & 3 & 2 & 1\\
\end{tabular}}
\end{center}
\begin{center}
\vspace{0.3 cm}{\tiny $M_9=$
\begin{tabular}{|cccccc}
\hline
   1 & 2 & 3 & 4 & 5 & 6\\
   2 & 3 & 1 & 6 & 4 & 5\\
   3 & 1 & 2 & 5 & 6 & 4\\
   4 & 6 & 5 & 1 & 2 & 3\\
   5 & 4 & 6 & 2 & 3 & 1\\
   6 & 5 & 4 & 3 & 1 & 2\\
\end{tabular}
$M_{10}=$
\begin{tabular}{|cccccc}
\hline
   1 & 2 & 3 & 4 & 5 & 6\\
   2 & 1 & 6 & 5 & 4 & 3\\
   3 & 5 & 1 & 2 & 6 & 4\\
   4 & 6 & 2 & 1 & 3 & 5\\
   5 & 3 & 4 & 6 & 2 & 1\\
   6 & 4 & 5 & 3 & 1 & 2\\
\end{tabular}
$M_{11}=$
\begin{tabular}{|cccccc}
\hline
   1 & 2 & 3 & 4 & 5 & 6\\
   2 & 1 & 4 & 5 & 6 & 3\\
   3 & 4 & 2 & 6 & 1 & 5\\
   4 & 5 & 6 & 2 & 3 & 1\\
   5 & 6 & 1 & 3 & 2 & 4\\
   6 & 3 & 5 & 1 & 4 & 2\\
\end{tabular}
$M_{12}=$
\begin{tabular}{|cccccc}
\hline
   1 & 2 & 3 & 4 & 5 & 6\\
   2 & 1 & 5 & 6 & 4 & 3\\
   3 & 5 & 4 & 2 & 6 & 1\\
   4 & 6 & 2 & 3 & 1 & 5\\
   5 & 4 & 6 & 1 & 3 & 2\\
   6 & 3 & 1 & 5 & 2 & 4\\
\end{tabular}}
\vspace{0.3 cm}
\end{center}

The Latin squares $M_1$ and $M_2$ correspond to the multiplication table of the groups $\Z/6\Z$ and $S_3$, respectively. The following is the
set up for the analysis of $(3,6)$-nets. We first fix one Latin square $M$ from the list above. Let $\A=\{ \A_1, \A_2, \A_3 \}$ be the
corresponding (possible) $(3,6)$-net, where $\A_1= \{L_1,\ldots,L_6 \}$, $\A_2= \{L_7,\ldots,L_{12}\}$,, and $\A_3= \{L_{13},\ldots,L_{18} \}$.
As before, we consider a new arrangement $\A'$ together with a point $P$ such that $\A'=\A \setminus \{L_1,L_7,L_{13} \}$, and $P=[[1,7,13]]\in
\X$. We label the lines of $\A'$ from $1$ to $15$ following the order of $\A$, i.e., $L'_1=L_2, \ldots, L'_5=L_6$, $L'_6=L_8$, etc, eliminating
$L_1$, $L_7$, and $L_{13}$. Let $L(\A',P)$ be the line in $\P^{13}$ for $(\A',P)$. The special lines (or points of $L(\A',P)$) $\alpha=L_1$,
$\beta=L_7$, and $\gamma=L_{13}$ are as $\alpha=[a_1:a_2:a_3:a_4:a_5:1:a_6:a_7:a_8:0:1:a_6:a_7:a_8]$,
$\beta=[1:b_1:b_2:b_3:0:b_4:b_5:b_6:b_7:b_8:1:b_1:b_2:b_3]$, and $\gamma=\gamma(c_1,c_2,...,c_8)$ depending on $M$. Since there is $[t,u] \in
\P^1$ satisfying $\alpha t+ \beta u = \gamma$, we can and do write $a_1$, $a_2$, $a_3$, $a_4$, $a_6$, $a_7$, $a_8$, $b_4$, $b_5$, $b_6$, and
$c_5$ with respect to the rest of the variables.

After that, we start imposing the points in $\X'$ which translates, as before, into $2\times 2$ determinants equal to zero. At this stage we
have $20$ equations given by these determinants, and $12$ variables. We choose appropriately from them to isolate variables so that they appear
with exponent $1$. In the way of solving these equations, we prove or disprove realization for $\A$. When the $(3,6)$-net exists, i.e. $\A$ is
realizable in $\P^2$ over some field, the equations for its lines can be taken as: $L_1=(y)$, $L_2=(a_1 x + y + z)$, $L_3=(a_2x+b_1y+z)$,
$L_4=(a_3x+b_2y+z)$, $L_5=(a_4x+ b_3y+z)$, $L_6=(a_5x+z)$, $L_7=(x)$, $L_8=(x+b_4y+z)$, $L_9=(a_6x+b_5y+z)$, $L_{10}=(a_7x+b_6y+z)$,
$L_{11}=(a_8 x + b_7 y+z)$, $L_{12}=(b_8y+z)$, $L_{13}=(ux-ty)$, $L_{14}=(x+y+z)$, $L_{15}=(a_6x+b_1y+z)$, $L_{16}=(a_7x+b_2y+z)$, $L_{17}=(a_8x
+ b_3y+z)$, and $L_{18}=(z)$, where $[t,u]$ satisfies $\alpha t+ \beta u = \gamma$.

Now we apply this procedure case by case. We first give the result, after that we indicate the order we solve the equations coming from the
points in $\X'$, and then we give a moduli parametrization whenever the net exits. For simplicity, we work always in characteristic zero. We
often omit the final expressions for the variables, although they can be given explicitly.

\vspace{0.2cm}

\underline{$M_1$}: ($\Z/6\Z$) This gives a three dimensional moduli space. We have that some of these nets can be defined over $\R$. We solve
the determinants in the following order: $[[4,6,15]]$ solve for $c_3$, $[[5,10,14]]$ solve for $c_8$, $[[1,9,15]]$ solve for $c_1$, $[[5,9,13]]$
solve for $c_7$, $[[3,10,12]]$ solve for $c_6$, $[[2,10,11]]$ solve for $b_3$, $[[3,9,11]]$ solve for $c_2$, and $[[2,8,15]]$ solve for $b_2$.
If $a_5=a$, $b_1=d$, $b_8=b$, and $c_4=c$, then they must satisfy: \begin{center}
$c^2(-1+a)b^4(a^2-a^2b+cab+ab-2a+ca-bc)-b^2c(2c^2b^2+5cab+4a^2b^2c-4ca^2b-2b^2a^3-a^2c+3a^2-2a^3-5a^2b+ca^3+2a^2b^2-bc^2a+4ba^3-4ac^2b^2-3ab^2c
-a^3b^2c+c^2ba^2+2a^2b^2c^2)d+(bc+a-ab)(a^2b^2c^2+c^2b^2-2ac^2b^2+a^2b^2c-ab^2c+2cab-ca^2b+a^2b^2-2a^2b+a^2)d^2=0.$
\end{center} So, the moduli space for these nets is an open set of this hypersurface.

\underline{$M_2$}: ($S_3$) This gives a three dimensional moduli space parametrized by an open set of $\Af^3$. It does not contains $(3,6)$-nets
defined over $\R$. The reason is that we need the square root of $-1$ to define the nets. Moreover, all of them have extra $3$-points, apart
from the ones coming from $\X$. The order we take is: $[[5,10,14]]$ solve for $c_8$, $[[2,6,15]]$ solve for $c_1$, $[[1,10,13]]$ solve for
$c_7$, $[[1,9,12]]$ solve for $c_6$, $[[2,10,11]]$ solve for $b_1$, $[[5,6,12]]$ solve for $c_3$, $[[1,8,15]]$ solve for $b_2$, $[[1,7,14]]$
solve for $b_8$, and $[[2,8,14]]$ solve for $c_2$. If $i=\sqrt{-1}$, $a_5=a$, $b_3=e$ and $c_4=c$, then the expressions for the variables are:

\vspace{0.2cm}
\begin{center}
$a_1=\frac{(1+i\sqrt{3})(2c+e-i\sqrt{3}e)a}{4ce}$ $a_2= \frac{2ace}{2aec-ac-ce-ice\sqrt{3}+ae+iae\sqrt{3}+ica\sqrt{3}}$
$a_3=\frac{(-1+i\sqrt{3})(ae-iae\sqrt{3}-2ce+2ac)a}{2(2ae-2ce+2aec+ac+ica\sqrt{3})}$ $a_4=a$ $a_5=a$
$a_6=\frac{(-1+i\sqrt{3})(ae-iae\sqrt{3}-2ce+2ac)a}{2(2aec-ac-ce-ice\sqrt{3}+ae+iae\sqrt{3}+ica\sqrt{3})}$ $a_7=
\frac{(1+i\sqrt{3})(e-i\sqrt{3}e+2c)a^2}{2(2ae-2ce+2aec+ac+ica\sqrt{3})}$ $a_8=\frac{(1+i\sqrt{3})a}{2}$

$b_1= \frac{(-1+i\sqrt{3})e^2(a-c)}{2aec-ac-ce-ice\sqrt{3}+ae+iae\sqrt{3}+ica\sqrt{3}}$
$b_2=\frac{(1+i\sqrt{3})(-ce+ae+ac)e}{2ae-2ce+2aec+ac+ica\sqrt{3}}$ $b_3= e$ $b_4= \frac{(-1+i\sqrt{3})(a-c)e}{2ac}$ $b_5=
\frac{(1+i\sqrt{3})(-ce+ae+ac)e}{2aec-ac-ce-ice\sqrt{3}+ae+iae\sqrt{3}+ica\sqrt{3}}$ $b_6=\frac{2aec}{2ae-2ce+2aec+ac+ica\sqrt{3}}$ $b_7=
\frac{(1+i\sqrt{3})e}{2}$ $b_8= \frac{(1+i\sqrt{3})e}{2}$
\end{center}
\vspace{0.3cm}

For instance, if we plug in $a= \frac{c+ic\sqrt{3}}{1+i\sqrt{3}-2c}$ and $e= \frac{c(1+i\sqrt{3})}{2(c-1)}$, we get a one dimensional family of
arrangements of $18$ lines with $t_2=18$, $t_3=39$, $t_4=3$, $t_k=0$ otherwise.

\underline{$M_3$}: This gives a three dimensional moduli space which does not contains $(3,6)$-nets defined over $\R$. The reason again is that
we need to have the square root of $-1$ to realize the nets. The order we solve is: $[[5,10,11]]$ solve for $b_8$, $[[1,9,15]]$ solve for $c_8$,
$[[5,9,12]]$ solve for $c_6$, $[[3,6,15]]$ solve for $c_1$, $[[1,10,13]]$ solve for $c_7$, $[[4,9,11]]$ solve for $b_3$, $[[1,6,12]]$ solve for
$b_1$, and $[[3,10,12]]$ solve for $b_2$. If $a_5=a$, $c_3=d$, $c_2=e$, and $c_4=c$, then they must satisfy: \begin{center}
$(e^2a^2+e^2-e^2a-2a^2de-de+d^2+3dea+d^2a^2-2d^2a)+(-ea-e+ad-d)c+c^2=0$
\end{center}and so its moduli space is an open set of this hypersurface. Moreover, by solving for $c$, we have that: $c= \frac{1}{2}(ea+e-ad+d \pm
\sqrt{-3}(a-1)(d-e))$. But, we cannot have $a=1$ or $d=e$, and so this shows that the square root of $-1$ is necessary.

\underline{$M_4$}: This case is not possible over $\C$. To get the contradiction, we take: $[[5,10,13]]$ solve for $c_7$, $[[3,7,15]]$ solve for
$c_6$, $[[2,8,15]]$ solve for $b_2$, $[[4,6,15]]$ solve for $c_3$, $[[5,6,14]]$ solve for $a_5$, $[[1,9,15]]$ solve for $c_8$, $[[1,10,14]]$
solve for $c_1$, $[[3,8,14]]$ solve for $c_2$, $[[2,10,11]]$ solve for $c_4$, and $[[2,6,13]]$ solve for $b_1$. At this stage, we obtain several
possibilities from the equation given by $[[2,6,13]]$, none of them possible (for example, $a_2=a_6$).

\underline{$M_5$}: This case is not possible over $\C$. By solving $[[5,10,13]]$ for $c_7$, and then $[[3,7,15]]$ for $c_6$, we obtain $a_6=a_7$
which is a contradiction.

\underline{$M_6$}: This gives a two dimensional moduli space, and so this parameter space are not always three dimensional (see \cite[p.
14]{Stipins1}). Some of these nets can be defined over $\R$. The order we take is: $[[5,10,11]]$ solve for $a_5$, $[[1,9,15]]$ solve for $c_8$,
$[[3,7,15]]$ solve for $c_6$, $[[2,6,15]]$ solve for $b_1$, $[[5,6,13]]$ solve for $c_7$, $[[4,9,11]]$ solve for $b_3$, $[[2,9,13]]$ solve for
$c_1$, $[[1,10,12]]$ solve for $c_3$, and $[[3,9,12]]$ solve for $b_2$. If $b_8=b$, $c_2=d$, and $c_4=c$, then they must satisfy:
$$bc(1-c)(bc-c-b)+(bc^3+b^2-5bc^2+3bc-2b^2c+b^2c^2-c^3+2c^2)d+(-b+2bc-2c+c^2)d^2=0.$$ Thus, its moduli space is an open set of this
hypersurface.

\underline{$M_7$}: This gives a two dimensional moduli space parametrized by an open set of $\Af^2$. These nets can be defined over $\Q$. The
order we solve is the following: $[[5,6,13]]$ solve for $c_7$, $[[3,6,15]]$ solve for $b_2$, $[[1,9,15]]$ solve for $c_8$, $[[5,9,12]]$ solve
for $c_6$, $[[1,10,14]]$ solve for $b_3$, $[[3,9,11]]$ solve for $b_8$, $[[4,8,11]]$ solve for $c_3$, $[[4,10,13]]$ solve for $c_2$,
$[[4,7,15]]$ solve for $b_1$, and $[[5,7,11]]$ solve for $c_1$. If $a_5=a$ and $c_4=c$, then we have:

\vspace{0.4cm}
\begin{center}
$a_1=\frac{(c^2-4c+2ac+4-2a)a}{c(a-2)(c-2)}$ $a_2= \frac{(c-1)(c-2)(a-2)a}{a^2c^2+a^2-2a^2c-2c^2a+5ac-2a+c^2-2c}$
$a_3=\frac{ac(a+c-2)}{-c^2-ac+c^2a+2c-2a+a^2}$ $a_4=\frac{(a-2)(a-ac+c-2)a}{-c^2+c^2a-3ac+2c+a^2c+2a-a^2}$ $a_5=a$
$a_6=\frac{(a+c-2)(-a+ac-c+2)a}{a^2c^2+a^2-2a^2c-2c^2a+5ac-2a+c^2-2c}$ $a_7= \frac{(a-2)a^2(c-1)}{c^2+ac-c^2a-2c+2a-a^2}$
$a_8=\frac{a(c^2-4c+2ac+4-2a)}{-c^2+c^2a-3ac+2c+a^2c+2a-a^2}$

$b_1= \frac{(a-1)(a-2)(c-2)^2c}{(a+c-2)(a^2c^2+a^2-2a^2c-2c^2a+5ac-2a+c^2-2c)}$ $b_2=\frac{(c-a)(a-2)(c-2)}{-c^2-ac+c^2a+2c-2a+a^2}$ $b_3=
\frac{(a-2)^2(c-1)a(c-2)}{(a+c-2)(c^2-c^2a+3ac-2c-a^2c-2a+a^2)}$ $b_4= \frac{(c-a)(a-2)(c-2)}{ac(a+c-2)}$ $b_5=
\frac{(c-1)(c-2)(a-2)a}{a^2c^2+a^2-2a^2c-2c^2a+5ac-2a+c^2-2c}$ $b_6=\frac{c(a-2)(c-2)a(a-1)}{(c^2+ac-c^2a-2c+2a-a^2)(a+c-2)}$ $b_7=
\frac{c(a-2)(c-2)}{-c^2+c^2a-3ac+2c+a^2c+2a-a^2}$ $b_8= \frac{(a-2)(c-2)}{2-a-c}$
\end{center}
\vspace{0.4cm}

\underline{$M_8$}: This also gives a two dimensional moduli space. Some of these nets can be defined over $\R$. The order we solve is the
following: $[[2,6,15]]$ solve for $b_1$, $[[1,10,13]]$ solve for $c_7$, $[[1,7,15]]$ solve for $c_6$, $[[5,7,14]]$ solve for $c_8$,
$[[4,10,11]]$ solve for $c_3$, $[[5,6,13]]$ solve for $b_2$, $[[2,10,14]]$ solve for $b_3$, $[[5,9,11]]$ solve for $a_5$, and $[[3,7,11]]$ solve
for $c_1$. If $b_8=b$, $c_4=c$, and $c_2=e$, then they have to satisfy:
$$c^2(c-b)(4c^2-6cb-b^3+3b^2)+c(cb-2c+b)(6c^2-9cb-b^3+4b^2)e+ (bc-b+c)(cb-2c+b)^2e^2=0.$$

Thus, its moduli space is an open set of this hypersurface.

\underline{$M_9$}: This gives a three dimensional moduli space. Some of them can be defined over $\R$. The order we solve is the following:
$[[5,10,11]]$ solve for $a_5$, $[[1,10,14]]$ solve for $c_8$, $[[4,7,15]]$ solve for $c_6$, $[[4,9,12]]$ solve for $b_3$, $[[1,8,15]]$ solve for
$c_7$, $[[5,8,12]]$ solve for $c_2$, $[[5,6,14]]$ solve for $c_1$, and $[[3,6,15]]$ solve for $b_2$. If $b_1=e$, $b_8=b$, $c_4=c$, and $c_3=d$,
then they have to satisfy: \begin{center}$(b^2c^2+c^2+bc-b^2c-2bc^2)+(-2c+2bc+ce-bec+e^2b-eb)d+(-e+1)d^2=0.$\end{center}

Thus, its moduli space is an open set of this hypersurface.

\underline{$M_{10}$}: This gives a two dimensional moduli space. Some of these nets can be defined over $\R$. The order we solve is the
following: $[[5,10,11]]$ solve for $a_5$, $[[1,7,15]]$ solve for $b_1$, $[[1,10,12]]$ solve for $c_6$, $[[3,6,15]]$ solve for $b_2$,
$[[5,6,13]]$ solve for $c_7$, $[[5,7,14]]$ solve for $c_8$, $[[4,8,15]]$ solve for $b_3$, $[[3,7,11]]$ solve for $c_3$, and $[[2,8,11]]$ solve
for $c_2$. If $b_8=b$, $c_4=c$, and $c_1=e$, then they have to satisfy: \begin{center}$ce(c-2e)+(2ce-c-e)(e-c)b+c(1-e)(e-c)b^2=0.$\end{center}

Thus, its moduli space is an open set of this hypersurface.

\underline{$M_{11}$}: This also gives a two dimensional moduli space. Some of these nets can be defined over $\R$. The order we solve is the
following: $[[5,10,11]]$ solve for $a_5$, $[[1,9,15]]$ solve for $c_8$, $[[3,8,11]]$ solve for $c_7$, $[[3,7,15]]$ solve for $c_6$, $[[4,6,15]]$
solve for $b_3$, $[[2,8,15]]$ solve for $b_1$, $[[4,9,11]]$ solve for $c_2$, $[[5,7,14]]$ solve for $c_3$, and $[[1,8,14]]$ solve for $c_1$. An
extra property for this nets is that $c_7$ has to be zero, and so $L_{13}$, $L_{16}$, and $L_{18}$ have always a common point of incidence. If
$b_2=e$, $b_8=b$, and $c_4=c$, then they must satisfy: \begin{center}$c(b-1)(bc-b-c)+(b^2c-2bc+c-b^2+2b)e-e^2=0.$\end{center}

Thus, its moduli space is an open set of this hypersurface.

\underline{$M_{12}$}: This case is not possible over $\C$. To achieve contradiction, we take: $[[2,9,15]]$ solve for $c_8$, $[[5,10,13]]$ solve
for $c_7$, $[[3,6,15]]$ solve for $b_2$, $[[1,8,15]]$ solve for $c_3$, $[[1,10,12]]$ solve for $c_6$, $[[5,9,11]]$ solve for $c_2$, and
$[[5,6,12]]$ solve for $b_1$. Then, the equation induced by $[[1,9,13]]$ gives six possibilities, none of them is possible.

\vspace{0.3cm}

\underline{\textbf{$(3,8)$-nets corresponding to the Quaternion group.}}

We now compute the $(3,8)$-nets corresponding to the multiplication table of the Quaternion group.

\begin{center}
\vspace{0.3 cm}{\tiny $M=$
\begin{tabular}{|cccccccc}
\hline
   1 & 2 & 3 & 4 & 5 & 6 & 7 & 8 \\
   2 & 1 & 6 & 7 & 8 & 3 & 4 & 5 \\
   3 & 6 & 2 & 5 & 7 & 1 & 8 & 4 \\
   4 & 7 & 8 & 2 & 3 & 5 & 1 & 6 \\
   5 & 8 & 7 & 6 & 2 & 4 & 3 & 1 \\
   6 & 3 & 1 & 8 & 4 & 2 & 5 & 7 \\
   7 & 4 & 5 & 1 & 6 & 8 & 2 & 3 \\
   8 & 5 & 4 & 3 & 1 & 7 & 6 & 2
\end{tabular}}
\end{center}
\vspace{0.3 cm}

In this case, we have a three dimensional moduli space for them, given by an open set of $\Af^3$. Also, these $(3,8)$-nets can be defined over
$\Q$ (so we can even draw them). This example shows again that non-abelian groups can also realize nets over $\C$. The set up is similar to what
we did before. In this case, $\A'=\A \setminus \{ L_1, L_9, L_{17} \}$ and $P=[[1,9,17]]$. Our distinguished points on $L(\A',P) \subseteq
\P^{19}$ are: $\alpha=[a_1:a_2:a_3:a_4:a_5:a_6:a_7:1:a_8:a_9:a_{10}:a_{11}:a_{12}:0:1:a_8:a_9:a_{10}:a_{11}:a_{12}]$,
$\beta=[1:b_1:b_2:b_3:b_4:b_5:0:b_6:b_7:b_8:b_9:b_{10}:b_{11}:b_{12}:1:b_1:b_2:b_3:b_4:b_5]$, and
$\gamma=[1:c_1:c_2:c_3:c_4:c_5:c_6:1:c_4:c_5:c_6:c_1:c_2:c_3:c_7:c_8:c_9:c_{10}:c_{11}:c_{12}]$. Let $[t:u] \in \P^1$ such that $\alpha t+ \beta
u = \gamma$. We isolate first $a_1$, $a_2$, $a_3$, $a_4$, $a_5$, $a_6$, $a_8$, $a_9$, $a_{10}$, $a_{11}$, $b_6$, $b_7$, $b_8$, $b_9$, $b_{10}$,
$b_{11}$, and $c_7$ with respect to the other variables. The following is the order we solve (some of) the $2 \times 2$ determinants given by
the $3$-points in $\X'$: $[[1,11,21]]$ solve for $c_{10}$, $[[2,10,21]]$ solve for $c_9$, $[[3,12,21]]$ solve for $c_{11}$, $[[4,8,21]]$ solve
for $b_3$, $[[5,13,21]]$ solve for $c_8$, $[[7,14,15]]$ solve for $b_{12}$, $[[5,14,20]]$ solve for $c_4$, $[[2,14,17]]$ solve for $c_1$,
$[[4,9,20]]$ solve for $c_2$, $[[6,13,15]]$ solve for $c_5$, $[[3,8,20]]$ solve for $b_5$, $[[3,10,15]]$ solve for $b_4$, $[[3,9,18]]$ solve for
$c_6$, and $[[3,11,19]]$ solve for $b_1$. Then, if we write $a_7=a$, $b_2=e$, and $c_3=d$, the expressions for all the variables are:

\vspace{0.4cm}
\begin{center}
$a_1=\frac{ad-a-d}{a-2} \ \ \ a_2=\frac{2e^2d-2ed+ed^2-e^2d^2+(-2ed^2+e^2d^2+2e+6ed-3e^2d-4)a+(-4ed-e+4+ed^2+e^2d)a^2}{(ae-2a-2e+2)(ade-ed+d-a-da)}$ \\
$a_3=\frac{e(ade-ed+d-a-da)}{ae-2a-2e+2} \ \ \ a_4=d$ \\
$a_5=\frac{4d+2e^2d-6ed+e^2d^2-ed^2+(-2e^2d^2+2ed^2-8d-e^2d+10ed-2e)a+(4d+e+e^2d^2-ed^2-4ed)a^2}{(ae-2a-2e+2)(a+d-ad-de)}$\\
$a_6=\frac{(a+d-ad-de)(ae+dae-4a-2e-ed+4)}{(ae-2a-2e+2)(ade-a-da-2ed+2+d)} \ \ \ a_7=a \ \ \ a_8=\frac{(a+d-da-2)e}{ae-2a-2e+2}$ \\
$a_9=\frac{2e^2d-2ed+ed^2-e^2d^2+(-2ed^2+e^2d^2+2e+6ed-3e^2d-4)a+(-4ed-e+4+ed^2+e^2d)a^2}{(a+d-ad-2)(ae-2a-2e+2)}$\\
$a_{10}=a+d-ad  \ \ \ a_{11}=\frac{ade+ae-4a-2e-ed+4}{ae-2a-2e+2}$ \\
$a_{12}=\frac{2e^2d+4d-6ed+e^2d^2-ed^2+(-2e^2d^2+2ed^2-8d-e^2d+10ed-2e)a+(4d+e+e^2d^2-ed^2-4ed)a^2}{(ade-a-da-2ed+d+2)(ae-2a-2e+2)}$
\end{center}

\begin{center}
$b_1=\frac{-2e+ed+ae-2a}{-ed+dae+d-a-da} \ \ \ b_2=e \ \ \ b_3=2 \ \ \ b_4=\frac{ae-2a-2e-ed+4}{a+d-ad-de} \ \ \
b_5=\frac{ade+ae-4a-2e-ed+4}{ade-a-da-2ed+d+2} \ \ \ b_6=\frac{2}{d}$ $b_7=\frac{e(ad-d+2-a)}{ad+de-a-d} \ \ \
b_8=-\frac{-2e+ed+ae-2a}{a+d-ad-2} \ \ \ b_9= 2-a \ \ \ b_{10}=\frac{ae+dae-4a-ed-2e+4}{ade-ed+d-a-da}$
$b_{11}=\frac{ae-2a-2e+4-ed}{ade-a-da-2ed+d+2} \ \ \ b_{12}=\frac{a}{a-1}$,
\end{center} with $[t:u]=[2-a:d(a-1)] \in \P^1$.
\vspace{0.4cm}

Since $b_3=2$, these $(3,8)$-nets are not possible in characteristic $2$. The lines for these $(3,8)$-nets can be written as: $L_1=(y)$,
$L_2=(a_1 x + y + z)$, $L_3=(a_2x+b_1y+z)$, $L_4=(a_3x+b_2y+z)$, $L_5=(a_4x+ b_3y+z)$, $L_6=(a_5x+b_4 y+z)$, $L_7=(a_6x+b_5y+z)$,
$L_8=(a_7x+z)$, $L_9=(x)$, $L_{10}=(x+b_6y+z)$, $L_{11}=(a_8 x + b_7 y+z)$, $L_{12}=(a_9x+b_8y+z)$, $L_{13}=(a_{10}x+b_9y+z)$,
$L_{14}=(a_{11}x+b_{10}y+z)$, $L_{15}=(a_{12}x+b_{11}y+z)$, $L_{16}=(b_{12}y+z)$, $L_{17}=(ux-ty)$, $L_{18}=(x+y+z)$, $L_{19}=(a_8x+b_1y+z)$,
$L_{20}=(a_9x+b_2y+z)$, $L_{21}=(a_{10}x+b_3y+z)$, $L_{22}=(a_{11}x+b_4y+z)$, $L_{23}=(a_{12}x+b_5y+z)$, and $L_{24}=(z)$.
\vspace{0.4cm}

A natural question, which we leave open, is the following:
\begin{question}
\textit{Is there a combinatorial characterization of the main classes of $q \times q$ Latin squares which realize $(3,q)$-nets in $\P_\C^2$?}
\label{q1}
\end{question}

\bibliographystyle{amsplain}

\begin{thebibliography}{99}


\bibitem{Aczel}
    J. Aczel.
    \emph{Quasigroups, nets, and nomograms},
    Adv. in Math. 1 (1965) 383-450.

\bibitem{ArDo06}
    M. Artebani and I. Dolgachev.
    \emph{The Hesse pencil of plane cubic curves},
    arXiv:math.AG/0611590, to appear on L'Enseignement Math\'ematique.

\bibitem{BarStram}
    A. Barlotti and K. Strambach.
    \emph{The geometry of binary systems},
    Adv. in Math. 49 (1983) 1-105.

\bibitem{Buzu05}
    M. A. Marco Buzu{\~{n}}ariz.
    \emph{Resonance varieties, admissible line combinatorics and combinatorial pencils},
    arXiv:math/0505435.


\bibitem{Chein}
    O. Chein, H. O. Pflugfelder and J. D. H. Smith.
    \emph{Quasigroups and loops: theory and applications},
    Sigma Series in Pure Mathematics 8, Heldermann Verlag, Berlin, 1990.

\bibitem{DeKe74}
    J. D\'enes and A. D. Keedwell.
    \emph{Latin squares and their applications},
    Academic Press, 1974.

\bibitem{Do04}
    I. Dolgachev.
    \emph{Abstract configurations in algebraic geometry},
    The Fano Conference, Univ. Torino, Turin (2004) 423-462.

\bibitem{DMWZ07}
    C. Dunn, M. S. Miller, M. Wakefield and S. Zwicknagl.
    \emph{Equivalence classes of {L}atin squares and nets in $\mathbb{C} \mathbb{P}^2$},
    arXiv:math/0703142v4.

\bibitem{FalkYuz07}
    M. Falk and S. Yuzvinsky.
    \emph{Multinets, resonance varieties, and pencils of plane curves},
    Compos. Math. 143, no. 4, (2007) 1069-1088.

\bibitem{Gr06}
    B. Gr\"unbaum.
    \emph{Configurations of points and lines},
    The Coxeter legacy, Amer. Math. Soc., Providence RI, 2006, 179-225.

\bibitem{Harshorne}
    R. Hartshorne.
    \emph{Algebraic geometry},
    Graduate Text in Mathematics v.52, Springer, 1977.

\bibitem{Hirsch79}
    J. W. P. Hirschfeld.
    \emph{Projective geometries over finite fields},
    The Clarendon Press Oxford University Press, New York, 1979, Oxford Mathematical Monographs.

\bibitem{HiLines83}
    F. Hirzebruch.
    \emph{Arrangements of lines and algebraic surfaces},
    Arithmetic and geometry, Vol. II, Progr. Math. 36, Birkh\"auser, Boston, Mass., 1983, 113-140.

\bibitem{KaVeronese93}
    M. M. Kapranov.
    \emph{Veronese curves and Grothendieck-Knudsen moduli space $\overline{M_{0,n}}$ },
    J. Algebraic Geom. 2 (1993) 239-262.

\bibitem{KaChow93}
    M. M. Kapranov.
    \emph{Chow quotients of Grassmannians I},
    Adv. Soviet Math. 16, part 2, A.M.S., (1993) 29-110.

\bibitem{kawa07}
    Y. Kawahara.
    \emph{The non-vanishing cohomology of {O}rlik-{S}olomon algebras},
    Tokyo J. Math. 30(2007), no.1 223--238.

\bibitem{LibYuz00}
    A. Libgober and S. Yuzvinsky.
    \emph{Cohomology of the {O}rlik-{S}olomon algebras and local systems},
    Compositio Math. 121, no. 3, (2000) 337-361.

\bibitem{PeYuz07}
    J. V. Pereira and S. Yuzvinsky.
    \emph{Completely reducible hypersurfaces in a pencil},
    arXiv:math/0701312v2.

\bibitem{Stipins1}
    J. Stipins.
    \emph{Old and new examples of $k$-nets in $\P^2$},
    arXiv:math.AG/0701046.

\bibitem{Stipins2}
    J. Stipins.
    \emph{On finite $k$-nets in the complex projective plane},
    Ph.D. Thesis, University of Michigan (2007).

\bibitem{Urzua4}
    G. Urz\'ua.
    \emph{Arrangements of curves and algebraic surfaces},
    Ph.D. Thesis, University of Michigan (2008).

\bibitem{YuzNets04}
    S. Yuzvinsky.
    \emph{Realization of finite abelian groups by nets in $\P^2$},
    Compos. Math. 140, no. 6, (2004) 1614--1624.

\bibitem{Yuz08}
    S. Yuzvinsky.
    \emph{A new bound on the number of special fibers in a pencil of curves},
    arXiv:0801.1521v2.

\end{thebibliography}

%-----------------------------------------------------------------------------------------------------------------------------------------------

\vspace{0.1 cm} {\small Department of Mathematics and Statistics, University of Massachusetts at Amherst, USA.}
%{\tiny MSC classes: primary 52C30 - 14J10 - 05B15, secondary 05B30-14H50-52C35}
\end{document}